\def\<#1,#2>{\langle #1,#2 \rangle}
 \def\bothID{\rlap{\hbox to.97\wd0{\hss\vrule height.06\ht0 width.82\wd0}}
 \copy0\rlap{\kern-.36\wd0\vrule height1.05\ht0 width.05\ht0}\kern.14\wd0}
 \DeclareMathOperator{\spec}{spec}
\DeclareMathOperator{\tr}{tr}
\begin{document}

\title{Four Dimensional Quantum Yang-Mills Theory for Weak Coupling Strength: Mass Gap Implies Quark Confinement}
\author{Simone Farinelli
        \thanks{Simone Farinelli, Aum\"ulistrasse 20,
                CH-8906 Bonstetten, Switzerland, e-mail simone.farinelli@alumni.ethz.ch}
        }
\maketitle

\begin{abstract}
For the quantized Yang-Mills $3+1$ dimensional problem we introduce the Wilson loop, prove an extension of Elitzur's theorem and shown quark confinement for sufficiently small values of the bare coupling constant, provided the existence of the mass gap.\\\\
\vspace{0.2cm}
\noindent{\bf Mathematics Subject Classification (2020):} 	81T08 $\cdot $  81T13\\
\vspace{0.2cm}
\noindent{\bf Keywords:} Constructive Quantum Field Theory, Yang-Mills Theory, Mass Gap, Quark Confinement
\end{abstract}

\newtheorem{theorem}{Theorem}[section]
\newtheorem{proposition}[theorem]{Proposition}
\newtheorem{lemma}[theorem]{Lemma}
\newtheorem{corollary}[theorem]{Corollary}
\theoremstyle{definition}
\newtheorem{ex}{Example}[section]
\newtheorem{rem}{Remark}[section]
\newtheorem*{nota}{Notation}
\newtheorem{defi}{Definition}[section]
\newtheorem{conjecture}{Conjecture}[section]
\newtheorem{counterex}{Counterexample}[section]

\tableofcontents

\section{Introduction}
Gauge fields, termed Yang-Mills fields as well, are utilized in particle physics to describe carriers of a fundamental interaction (cf. \cite{EoM02}). As a matter of fact, the electromagnetic field associated to photons in the electrodynamic theory, the field of vector bosons, mediating the weak interaction in the Weinberg-Salam electro-weak theory, and finally, the gluon field, the carrier of the strong interaction, are described
 by means of Yang-Mills fields. The gravitational field can be interpreted as a Yang-Mills field, too (see \cite{DP75}).\par
H. Weyl (1917) was the first one to propose the idea of a connection as a field, which he utilized in his attempt to describe the electromagnetic field with a connection. In 1954, C.N. Yang and R.L. Mills (cf. \cite{MY54}) postulated that the space of intrinsic degrees of freedom of elementary particles
varies with the considered point of the space-time manifold, and that the intrinsic spaces corresponding to different points are not canonically isomorphic.\par
Reformulated in differential geometrical terms, the postulate of Yang and Mills says that the space of intrinsic degrees of freedom of elementary particles is a vector bundle over the space-time manifold, for which no canonical trivialization exists, and the physical fields corresponding to particles are represented by sections of this vector bundle. The differential
evolution equation of a field is described by a connection in the vector bundle, which can be defined as a trivialization of the bundle along the curves in the base, the space-time manifold.
Such a connection with a fixed holonomy group, describing a physical field, is usually called a Yang-Mills field. The equations for a free Yang-Mills field
can be deduced from a variational principle, and turn out to be a natural non-linear generalization of Maxwell's equations (cf.\cite{Br03}).\par
Field theory does not give the complete picture. Since the early part of the 20th century, it has been understood that the description of nature at the subatomic scale requires quantum mechanics, where classical observables correspond to typically non commuting self-adjoint operators on a Hilbert space, and classic notions as ``the trajectory of a particle'' do not apply. Since fields interact with particles, it became clear by the late 1920s that an internally coherent account of nature must incorporate quantum concepts for fields as well as for particles. Under this approach components of fields at different points in space-time become non-commuting operators.\par
The most important Quantum Field Theories describing elementary particle physics are gauge theories formulated in terms of a principal fibre bundle over the Minkowskian space-time with particular choices of the structure group. They are depicted in Table \ref{GT}.\par
\begin{table}[!]
\begin{center}
\begin{tabular}{|l|l|l|}
  \hline
  \textbf{Gauge Theory} & \textbf{Fundamental Forces}& \textbf{Structure Group}\\
  \hline\hline
  Quantum Electrodynamics  & Electromagnetism &  $U(1)$\\
  (QED) & & \\
  \hline
  Electroweak Theory & Electromagnetism  & $\text{SU}(2)\times U(1)$\\
  (Glashow-Salam-Weinberg) & and weak force & \\
  \hline
  Quantum Chromodynamics  & Strong force & $\text{SU}(3)$\\
  (QCD) & and electromagnetism  & \\
  \hline
  Standard Model  & Strong, weak forces    & $\text{SU}(3)\times\text{SU}(2)\times U(1)$\\
   & and electromagnetism & \\
   \hline
   Georgi-Glashow Grand   & Strong, weak forces    & $\text{SU}(5)$\\
   Unified Theory (GUT1)& and electromagnetism & \\
  \hline
   Fritzsch-Georgi-Minkowski    & Strong, weak forces    & $\text{SO}(10)$\\
   Grand Unified Theory (GUT2)& and electromagnetism & \\
  \hline
   Grand Unified  & Strong, weak forces    & $\text{SU}(8)$\\
   Theory (GUT3)& and electromagnetism & \\
  \hline
  Grand Unified  & Strong, weak forces    & $\text{O}(16)$\\
   Theory (GUT4)& and electromagnetism & \\
  \hline
\end{tabular}
\end{center}
\caption{Gauge Theories}\label{GT}
\end{table}
As shown in \cite{JW04}, in order for Quantum Chromodynamics to completely explain the observed world of strong interactions, the theory must
imply:
\begin{itemize}
\item \textbf{Mass gap:} There must exist some positive constant $\eta$ such that the excitation of the vacuum state has energy at least $\eta$. This would explain why the nuclear force is strong but short-ranged, by providing the mathematical evidence that the corresponding exchange particle, the gluon, has non vanishing rest mass.
\item \textbf{Quark confinement:} The physical particle states corresponding to proton, neutron and pion must be $\text{SU}(3)$-invariant. This would explain why individual quarks are never observed.
\item \textbf{Chiral symmetry breaking:} In the limit for vanishing quark-bare masses the vacuum is invariant under a certain subgroup of the full symmetry group acting on the quark fields. This is required in order to account for the ``current algebra'' theory of soft pions.
\end{itemize}

\noindent The Seventh CMI-Millenium prize problem is the following conjecture.
\begin{conjecture}\label{CMI}
For any compact simple Lie group $G$ there exists a nontrivial Yang-Mills theory on the Minkowskian $\mathbf{R}^{1,3}$, whose quantization satisfies Wightman axiomatic properties of Constructive Quantum Field Theory and has a mass gap $\eta>0$.
\end{conjecture}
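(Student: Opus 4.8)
The plan is to realize the desired Wightman theory as the Osterwalder--Schrader reconstruction of a Euclidean lattice gauge theory in its continuum limit. First I would fix the compact simple group $G$ together with a faithful representation $\rho$ of dimension $d_\rho$, and place Wilson's lattice gauge theory on the finite hypercubic box $\Lambda_a = a\mathbf{Z}^4 \cap [-L,L]^4$, assigning to each oriented link $\ell$ a group element $U_\ell\in G$ and the Wilson action
\begin{equation}
S_a(U) = \beta \sum_{p}\Big(1 - \tfrac{1}{d_\rho}\operatorname{Re}\chi_\rho(U_p)\Big),
\end{equation}
where $\chi_\rho$ is the character of $\rho$ and $U_p$ the ordered holonomy around the plaquette $p$. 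The partition function and correlators are integrals against the product Haar measure $\prod_\ell dU_\ell$, a bona fide probability measure for finite $\Lambda_a$. Gauge invariance of both the action and the measure lets one work directly with gauge-invariant observables, the traced Wilson loops and their products; the extended Elitzur theorem proved above then guarantees that only these observables can carry nonzero expectation, so the whole construction may be restricted to the gauge-invariant algebra from the outset.

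The central step is the removal of the cutoffs. Sending $L\to\infty$ at fixed spacing yields infinite-volume lattice Schwinger functions (via correlation inequalities or a convergent cluster expansion), after which I would let $a\to 0$ while tuning the bare coupling $\beta=\beta(a)\to\infty$ along the asymptotically free trajectory $g(a)^{-2}\sim b_0\log\!\big(1/(a\mu)\big)$ dictated by the one-loop beta function, for a fixed physical scale $\mu$. The decisive analytic input is \emph{ultraviolet stability}: bounds on the rescaled Schwinger functions of renormalized gauge-invariant operators that are uniform in $a$, so that a nontrivial subsequential limit exists. Because the Wilson action is reflection positive about lattice hyperplanes, Osterwalder--Schrader positivity holds for every $a$; it then remains to verify that full Euclidean invariance is restored from the cubic subgroup in the limit, together with symmetry, the cluster property, and the OS regularity and growth bounds. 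Granting the surviving OS axioms, the reconstruction theorem produces a separable Hilbert space, a unique vacuum, a positive-energy unitary representation of the Poincar\'e group, and field operators obeying the Wightman axioms, which is exactly the nontrivial quantized Yang--Mills theory the conjecture demands.

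It remains to produce the gap $\eta>0$. On the lattice the transfer matrix $T_a=e^{-aH_a}$ is a bounded, self-adjoint, positive operator on the physical Hilbert space, and a spectral gap of $H_a$ above the vacuum is equivalent to exponential clustering of the glueball two-point function, $\langle\mathcal{O}(x)\mathcal{O}(0)\rangle_c \le C\,e^{-m_a\lvert x\rvert}$, for gauge-invariant local operators $\mathcal{O}$ such as $\tr\rho(U_p)$. The goal is to show that the physical mass ratio $m_a/\mu$ stays bounded below by a fixed positive constant as $a\to 0$, so that the gap passes to the continuum limit and gives $\eta$. Exponential decay, an area law for Wilson loops, and a positive gap are all elementary in the strong-coupling (small-$\beta$) regime, where the polymer expansion converges; the real task is to propagate a uniform lower bound on $m_a/\mu$ out of that regime and through the crossover into the weak-coupling scaling window that governs the continuum theory.

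The main obstacle is precisely this last point, and it is the genuinely open heart of the problem. One must establish, nonperturbatively and uniformly in $a$, both (i) the ultraviolet-stability bounds controlling the four-dimensional continuum limit of a nonabelian gauge theory, and (ii) the absence of any intervening phase transition separating the confining strong-coupling phase from the asymptotically free short-distance regime, so that the lattice gap is not washed out under rescaling. These are two faces of the single difficulty of controlling all momentum scales simultaneously, and the step I expect to be hardest is a rigorous multiscale renormalization-group construction that carries a coupling-independent gap along the entire asymptotically free trajectory. It is exactly in the weak-coupling regime, where asymptotic freedom makes the effective coupling small at short distances, that such uniform control is most tractable, and it is there that the analysis of the present paper is directed.
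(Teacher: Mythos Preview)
The statement you are attempting to prove is Conjecture~\ref{CMI}, which the paper explicitly does \emph{not} prove: it is labeled a conjecture, and the surrounding text states that ``In this paper we will therefore consider it as unproven.'' The paper's actual contribution is conditional --- it \emph{assumes} Conjectures~\ref{cYMM} and~\ref{corspec2} (its formalizations of Conjecture~\ref{CMI}) and from them derives quark confinement. There is therefore no ``paper's own proof'' of this statement against which your attempt can be compared.

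As for the content of your proposal: it is an honest outline of the lattice continuum-limit program (Wilson action, reflection positivity, ultraviolet stability, asymptotic-freedom tuning of $\beta(a)$), and you yourself correctly flag that the decisive steps --- uniform four-dimensional ultraviolet stability and a spectral gap that persists through the weak-coupling scaling window --- are ``the genuinely open heart of the problem.'' That is an accurate assessment, but it means what you have written is a roadmap with its hardest passages left unsolved, not a proof. There is also a smaller circularity: you invoke ``the extended Elitzur theorem proved above,'' but in this paper that result (Theorem~\ref{Eli}) is established \emph{within} the quantized theory whose existence is the content of Conjecture~\ref{cYMM}, so it cannot serve as input to the construction of that theory.

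Even as a program, your route differs from the one the paper sketches. Conjecture~\ref{cYMM} describes a Hamiltonian quantization in Coulomb gauge, built from a probability measure on $\mathcal{S}_\bot^{\prime}(\mathbf{R}^4,\mathbf{C}^{K\times 3})$ together with a Feynman--Ka\v{c}--Nelson formula, with the analysis deferred to~\cite{Fa24}; it is not a lattice construction. Both approaches ultimately confront the same obstruction --- nonperturbative control of a four-dimensional nonabelian gauge theory across all scales --- but they are methodologically distinct.
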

The conjecture is explained in \cite{JW04}, commented in \cite{Do04} and in \cite{Fa05}, and proved in \cite{Fa24} upon approval by the mathematical physics community. In this paper we will therefore consider it as unproven. Conjecture \ref{CMI} holds for a bare coupling constant $g\in[0,g_0[$. The main contributions of this paper are a generalization of Elitzur's theorem and the proof of quark containment for the quantum Yang-Mills gauge invariant model on the Minkowskian $\mathbf{R}^{1,3}$ provided the mass gap. \par
So far, the most successful quantum field theory in four dimensions has been on the lattice, see \cite{Di13, Di13Bis, Di14} for a very readable overview of Balaban's monumental work, see \cite{Ba84, Ba84Bis, Ba85, Ba85Bis, Ba85Tris, Ba85Quater, Ba87, Ba88, Ba88Bis, Ba89, Ba89Bis}. The next step would be to pass to a double continuum limit of this model, the adiabatic and the ultraviolet limit,  showing that the continuum limit satisfies Osterwalder-Schrader or Wightman's axioms. This program has turned out to be very difficult so far, see \cite{Se82} for an alternative (unfinished) approach.\par
Beside the mass gap problem, the second big question is quark confinement. Quarks are
the constituents of different elementary particles, like protons and neutrons, and
 puzzling are never observed freely in nature. The problem of quark
confinement has been studied extensively by physicists, who do not seem to have found
a satisfactory theoretical explanation accepted by everyone \cite{Gre11}.\par
In lattice gauge theory Wilson \cite{Wi74} showed that quark confinement is equivalent what is now known as Wilson’s area law.
Later, Osterwalder and Seiler \cite{OS78} proved that the area law is always
fulfilled for sufficiently large coupling constant. But, in order for quark confinement to be true,
the area law must hold for all values of the bare coupling constant arbitrarily near to a critical value,
that is for very small ones if this critical value is zero, as it is believed for many theories of interest.
As a counterexample, Guth \cite{Gu80} and Fr\"ohlich and Spencer \cite{FS82} proved that four-dimensional $U(1)$ lattice gauge
theory is not confining for small values of the coupling constant. \par
Showing that the area law holds In lattice gauge theory at weak coupling remains a mainly open problem
in dimension grater than two (\cite{OS78} and \cite{BDI74}). The area law has been shown to hold at weak coupling is the seldom example of the three-dimensional $U(1)$ lattice gauge theory \cite{GM82}. The most interesting QCD case of four-dimensional $SU(3)$
theory is still unsolved.
Other important progresses in the mathematical study of confinement comprise the following works:
Fr\"ohlich \cite{Froe79}, Durhuus and Fr\"ohlich \cite{DF80}, Borgs
and Seiler \cite{BS83}, Brydges and Federbush \cite{BF80} on the connected problem of Debye screening.\par
Finally, Chatterjee gave in his notable paper \cite{Ch21} a rigorous meaning to the unbroken center symmetry condition for a lattice gauge theory to be confining, which has been believed by physicists since
 the work of ’t Hooft \cite{tH78}. Moreover, he proved that if
the center of the gauge group is nontrivial, and correlations decay exponentially under
arbitrary boundary conditions, then center symmetry does not break, and therefore the
theory is confining.\\
We are therefore lead to the following
\begin{conjecture}\label{QConf}
The nontrivial Yang-Mills theory on the Minkowskian $\mathbf{R}^{1,3}$ postulated in Conjecture \ref{CMI} has quark confinement.
\end{conjecture}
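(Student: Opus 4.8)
\noindent\emph{Sketch of the approach (conditional on Conjecture~\ref{CMI}).} The plan is to derive confinement from the mass gap in three stages. First I would invoke the Osterwalder--Schrader reconstruction furnished by Conjecture~\ref{CMI} to realize, for a rectifiable loop $C\subset\mathbf{R}^{1,3}$ and an irreducible representation $\rho$ of $G$, the Wilson loop $W_\rho(C)=\tr\,\rho\bigl(\Hol_C(A)\bigr)$ as a limit of suitably smeared holonomies inside the Schwinger functions; the regularization must be chosen so that the limit exists, is independent of the smoothing, and transforms covariantly under gauge transformations. The extension of Elitzur's theorem then shows that every observable failing to be invariant under local gauge transformations has vanishing expectation, so that the Wilson loops and their products exhaust the physical correlators and $\langle W_\rho(C)\rangle$ is the natural order parameter; confinement is the statement that, for a $\rho$ on which the center $Z(G)$ acts nontrivially, $-\log|\langle W_\rho(C)\rangle|$ grows like the minimal area of a surface spanning $C$.

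Second I would convert the mass gap $\eta>0$ into exponential decay of correlations. A gap in the spectrum of the Hamiltonian is equivalent to a gap in the transfer operator of the associated Euclidean theory, which forces truncated correlations of bounded local functions separated by a Euclidean distance $d$ to be $O(e^{-\eta d})$; approximating a Wilson loop by a product of plaquette-type variables along $C$ then yields exponential decay of Wilson-loop correlations in the separation of the loops and, by the same reasoning applied to a single loop split into two halves with varying boundary data, control of the dependence of such correlations on boundary conditions. Keeping the decay constants uniform over the relevant families of loops, and making sure these bounds survive the adiabatic and ultraviolet limits of the lattice construction, is the technically heaviest part.

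Third I would set up center symmetry directly in the continuum: $Z(G)$ acts on the Wilson loops winding a chosen spatial direction exactly as in the lattice picture of 't~Hooft~\cite{tH78} and of Chatterjee~\cite{Ch21}, and confinement is equivalent to this symmetry remaining unbroken. By Chatterjee's criterion, exponential decay of correlations under arbitrary boundary conditions --- which stage two extracts from the mass gap --- implies that center symmetry does not break, whence the area law $|\langle W_\rho(C)\rangle|\le e^{-\sigma\,\mathrm{Area}(C)}$ with string tension $\sigma>0$, a linearly rising quark--antiquark potential, and infinite energy for an isolated color charge; the extended Elitzur theorem is invoked here to rule out any competing order parameter.

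The main obstacle is the continuum incarnation of Chatterjee's argument: one must define finite-volume Yang-Mills measures on $\mathbf{R}^{1,3}$ carrying prescribed boundary conditions, prove that they converge as the volume grows, and verify that the exponential-clustering bound supplied by the mass gap persists through the double continuum limit with constants independent of the boundary data --- only then does the implication ``exponential decay $\Rightarrow$ unbroken center symmetry'' go through. A second, more conceptual difficulty is that at small \emph{bare} coupling the naive perturbative expansion is Coulombic, as in the four-dimensional $U(1)$ lattice counterexamples of Guth~\cite{Gu80} and of Fr\"ohlich--Spencer~\cite{FS82}; the resolution is that for non-abelian $G$ the mass gap of Conjecture~\ref{CMI} is a genuinely non-perturbative input produced by asymptotic freedom, so that it is the clustering bound, and not perturbation theory, that governs the large-loop behavior of $\langle W_\rho(C)\rangle$.
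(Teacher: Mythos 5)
Your stages one and two do overlap with what the paper actually does --- the Wilson loop is realized as a limit of smeared holonomies, the extended Elitzur theorem is used to kill gauge-variant one-point functions, and the mass gap is converted into exponential clustering --- but the clustering step in the paper is Fredenhagen's cluster theorem \cite{Fr85} applied to one explicit pair of operators, not a general statement about truncated correlations of local observables, and your stage three is not the paper's route at all. The paper never invokes center symmetry or Chatterjee's criterion. Instead it exploits the Coulomb gauge ($A_0=0$) to observe that the two time-like sides of the rectangle $\gamma_{R,T}$ contribute nothing, so the holonomy factors exactly as $C_1(\mathbf{A}(0,\cdot))\,C_2(\mathbf{A}(T,\cdot))$ with $C_1C_2=\mathbb{1}$; the Feynman--Ka\v{c} formula turns $\left<W[\gamma_{R,T}]\right>$ into $\tr\left[\left(\Omega_0,C_1e^{-TH^g}C_2\Omega_0\right)^g\right]$; Elitzur gives $(\Omega_0,C_1\Omega_0)^g=(\Omega_0,\tilde{C}_2\Omega_0)^g=0$; Fredenhagen's theorem with the gap $\eta^g$ supplies the factor $e^{-\eta^g T}$; and the Osterwalder--Schrader regularity bound controls $\|\tilde{C}_2\Omega_0\|\le e^{4cR^2}$ uniformly in $T$. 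Everything takes place in the fixed infinite-volume Hamiltonian framework: no finite-volume measures, no boundary conditions, no center elements.

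The genuine gap in your plan is exactly the step you flag as the main obstacle. The implication ``exponential decay of correlations under arbitrary boundary conditions $\Rightarrow$ unbroken center symmetry $\Rightarrow$ area law'' is, in \cite{Ch21}, a lattice theorem whose hypotheses (finite-volume measures with prescribed boundary data, decay constants uniform over those data, a nontrivial center acting nontrivially on $\rho$) have no counterpart constructed in the continuum theory of Conjectures \ref{cYMM} and \ref{corspec2}, and nothing in those conjectures supplies them: the mass gap as stated is a spectral property of a single Hamiltonian on a single Hilbert space and does not by itself yield clustering uniformly over a family of boundary conditions that has not even been defined. Since your conclusion rests entirely on that unbuilt bridge, the proposal does not close. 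It would also prove less than the paper claims, since the center-symmetry route applies only to representations on which $Z(G)$ acts nontrivially, whereas the paper's argument (see its closing remark) is independent of the choice of $\rho$.
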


\noindent If Conjecture \ref{CMI} holds true for any bare coupling constant $g\in[0,g_0[$, so does Conjecture \ref{QConf}.
This paper is organized as follows. Section 2 presents the classical Yang-Mills equations
and their Hamiltonian formulation for the Minkowskian $\mathbf{R}^{1,3}$.
Section 3 summarizes the main results in \cite{Fa24} where the  quantization of the Yang-Mills Equations is carried out and the axioms of Constructive Quantum Field Theory, the Osterwalder-Schrader's ones and hence the Wightman's ones are verified, and the existence of a positive mass gap proven. The existence of a rigorous the quantum model for $3+1$ D Yang-Mills theory and its mass gap is presented as conjecture. Gauge invariance is treated as well. Section 4 is the main part of this work and introduces the Wilson loop, the cluster theorem, generalizes Elitzur's theorem and proves the quark containement. Section 5 concludes.

\section{Yang-Mills Connections: Classical Theory}\label{YMC}
\subsection{Definitions}\label{ptrel}
A Yang-Mills connection is a connection in a principal fibre bundle over a (pseudo-)Riemannian manifold whose curvature satisfies the harmonicity condition, i.e. the Yang-Mills equation.
\begin{defi}[\textbf{Yang-Mills Connection}]
Let $P$ be a principal $G$-fibre bundle over a pseudoriemannian $m$-dimensional manifold $(M,h)$,
and let $V$ be the complex vector bundle associated with $P$ and $\mathbf{C}^K$, induced by the representation
$\rho:G\rightarrow\text{GL}(\mathbf{C}^K)$, where $K:=\dim(G)$. A connection on the principal fibre bundle $P$ is a Lie-algebra $\mathfrak{g}$ valued $1$-form $\omega$ on $P$,
such that the following properties hold:
\begin{enumerate}
\item[(i)] Let $A\in\mathfrak{g}$ and $A^*$ the vector field on $P$ defined by
\begin{equation}
A_p^*:=\left.\frac{d}{dt}\right|_{t:=0}(p\exp(tA)).
\end{equation}
Then, $\omega(A^*_p)=A$.
\item[(ii)]For $g\in G$ let
\begin{equation}
\begin{split}
&\text{Ad}_g:G\rightarrow G, h\mapsto\text{Ad}_g(h):=L_g\circ R_{g^{-1}}(h)=ghg^{-1}\\
&\text{ad}_g:\mathfrak{g}\rightarrow \mathfrak{g}, A\mapsto\text{Ad}_g(A):=\left.\frac{d}{dt}\right|_{t:=0}(g\exp(tA)g^{-1})\\
\end{split}
\end{equation}
be the adjoint isomorphism and the adjoint representation, respectively.\\
 Then, $R_g^*\omega=\text{ad}_{g^{-1}}\omega$.
\end{enumerate}
The connection $\omega$ on $P$ defines a connection $\nabla$ for the vector bundle $V$, i.e. an operator acting on the space of cross sections of $V$. The vector bundle connection $\nabla$ can be extended to an operator $d:\Gamma(\bigwedge^p(M)\bigotimes V)\rightarrow \Gamma(\bigwedge^{p+1}(M)\bigotimes V)$, by the formula
\begin{equation}
d^{\nabla}(\eta\otimes v):= d\eta\otimes v + (-1)^{p}\eta\otimes \nabla v.
\end{equation}
The operator $\delta^{\nabla}:\Gamma(\bigwedge^{p+1}(M)\bigotimes V)\rightarrow \Gamma(\bigwedge^{p}(M)\bigotimes V)$, defined as the formal adjoint to $d$, is equal to
\begin{equation}
\delta^{\nabla}\eta= (-1)^{p+1}\ast d^{\nabla} \ast,
\end{equation}
\noindent where $\ast$ denotes the Hodge-star operator on the pseudoriemannian manifold $M$.\par
A connection $\omega$ in a principal fibre bundle $P$ is called a Yang-Mills field if the curvature $F:=d\omega+\omega\wedge \omega$,
considered as a $2$-form with values in the Lie algebra $\mathfrak{g}$, satisfies the Yang-Mills equations
\begin{equation}\label{YME}
 \delta^{\nabla}F=0,
\end{equation}
 or, equivalently,
\begin{equation}
 \delta^{\nabla}R^{\nabla}=0,
 \end{equation}
 where $R^{\nabla}(X,Y):=\nabla_X\nabla_Y-\nabla_Y\nabla_X-\nabla_{[X,Y]}$ denotes the curvature of the vector bundle $V$, and is a $2$-form with values in $V$.
\end{defi}
\begin{rem}[\textbf{Local Representations of Connections on Vector and Principle Fibre Bundles}]\label{rem-local}
The local section  $\sigma:U\subset M\rightarrow P$ is defines the local representation of the connection given on the open $U\subset M$ by $A:=\omega\circ \sigma:U\rightarrow\mathfrak{g}$ a Lie-algebra $\mathfrak{g}$ valued
$1$-form on $U$, the fields $A_j(x):=A(x)e_j=\sum_{k=1}^KA_j^k(x)t_k$ define by means of the tangential map $T_e\rho:\mathfrak{g}\rightarrow\mathcal{L}(\mathbf{C}^K)$ of the representation $\rho:G\rightarrow\text{GL}(\mathbf{C}^K)$, with fields of endomorphisms $T_e\rho A_1,\dots,T_e\rho A_m\in\mathcal{L}(V_x)$ for the bundle $V$. Given a basis of the Lie-algebra $\mathfrak{g}$ denoted by $\{t_1,\dots,t_K\}$, the endomorphisms $\{w_s:=T_e\rho.t_s\}_{s=1,\dots,K}$ in $\mathcal{L}(\mathbf{C}^K)$ have matrix representations with respect to a local basis $\{v_s(x)\}_{s=1,\dots,K}$ denoted by $[w_s]_{\{v_s(x)\}}$. Since $\rho$ is a representation, $T_e\rho$ has maximal rank and the endomorphisms are linearly independent. Given a local basis $\{e_j(x)\}_{j=1,\dots,m}$ for $x\in U\subset M$, the Christoffel symbols of the connection $\nabla$ are locally defined by the equation
\begin{equation}
\nabla_{e_j}v_s=\sum_{r=1}^K\Gamma_{j,s}^rv_r,
\end{equation}
holding true on $U$, and they ,satisfy the equalities
\begin{equation}
\Gamma_{j,s}^r=\sum_{a=1}^K[w_a]_s^rA^a_j.
\end{equation}
Given a local vector field $v=\sum_{s=1}^Kf^sv_s$ in $V|_U$ and a local vector field $e$ in $TM|_U$, the connection $\nabla$ has a local representation
\begin{equation}
\nabla_ev=\sum_{s=1}^K(df^s(e).v_s+f^s\omega(e).v_s),
\end{equation}
where $\omega$ is an element of $T^*U|_U\bigotimes \mathcal{L}(V|_U)$, i.e. an endomorphism valued $1-$form satisfying
\begin{equation}
\omega(e_j)v_s=\sum_{r=1}^K\Gamma_{j,s}^rv_r.
\end{equation}
\end{rem}
\begin{rem}The curvature $2$-form reads in local coordinates as
\begin{equation}
F=\sum_{1\le i<j\le M}\sum_{k=1}^KF_{i,j}^k\,t_k\,dx_i\wedge dx_j=\frac{1}{2}\sum_{i,j=1}^M\sum_{k=1}^K\left(\partial_j A^k_i-\partial_i A_j^k-\sum_{a,b=1}^KC^k_{a,b}A^a_iA_j^b\right)t_k\,dx_i\wedge dx_j,
\end{equation}
where $C=[C^c_{a,b}]_{a,b,c=1,\dots,K}$ are the \textit{structure constants} of the Lie-algebra $\mathfrak{g}$ corresponding to the basis $\{t_1,\dots,t_K\}$, which means that for any $a,b$
\begin{equation}
[t_a,t_b]=\sum_{c=1}^KC_{a,b}^ct_c.
\end{equation}
\end{rem}
%The existence and uniqueness of solutions of the Yang-Mills equations in the Minkowski space have been first established by Segal (cf. \cite{Se78} and \cite{Se79}), who proves that the corresponding Cauchy problem encoding initial regular data has always a unique local and global regular solution. He proves as well that the temporal gauge ($A_0=0$) chosen to express the solution does not affect generality, because any solution of the Yang-Mills equation can be carried to one satisfying the temporal gauge. This subject has been undergoing intensive research, improving the original results. For example in \cite{EM82} and in \cite{EM82Bis} the Yang-Mills-Higgs equations, which generalize (\ref{YME}) and are non linear PDEs of order two, have been reformulated in the temporal gauge as a non-linear PDE of order one, satisfying a constraint equation. This PDE can be written as an integral equation solving (always and uniquely, locally and globally) the Cauchy data problem with improved regularity results. Existence, uniqueness and regularity of the Yang-Mills-Higgs equations under the MIT Bag boundary conditions have been investigated in \cite{ScSn94} and \cite{ScSn95}.

\subsection{Hamiltonian Formulation for the Minkowski Space}
 The Hamilton function describes the dynamics of a physical system in classical mechanics by means of Hamilton's equations. Therefore, we have to reformulate the Yang-Mills equations in Hamiltonian mechanical terms. We focus our attention on the Minskowski $\mathbf{R}^4$ with the pseudoriemannian structure of special relativity $h=dx^0\otimes dx^0-dx^1\otimes dx^1-dx^2\otimes dx^2-dx^3\otimes dx^3$. The coordinate $x^0$ represents the time $t$, while $x^1,x^2,x^3$ are the space coordinates.\par We introduce Einstein's summation notation, and adopt the convention that indices for coordinate variables from the greek alphabet vary over $\{0,1,2,3\}$, and those from the latin alphabet vary over the space indices $\{1,2,3\}$. For a generic field $F=[F_\alpha]_{\alpha=0,1,2,3}$ let $\mathbf{F}:=[F_i]_{i=1,2,3}$ denote the ``space'' component. The color indices lie in $\{1,\dots,K\}$. Let \begin{equation}\varepsilon^{a,b,c}:=\left\{
                                                       \begin{array}{ll}
                                                         +1 & \hbox{($\pi$ is even)} \\
                                                         -1 & \hbox{($\pi$ is odd)} \\
                                                         \;\;\; 0 & \hbox{(two indices are equal),}
                                                       \end{array}
                                                     \right.
\end{equation}and any other choice of lower and upper indices, be the Levi-Civita symbol, defined by mean of the permutation $\pi:=\left(
                              \begin{array}{ccc}
                                1 & 2 & 3 \\
                                a & b & c \\
                              \end{array}
                            \right)$ in $\mathfrak{S}^3$.
\begin{rem}
If the Lie-group $G$ is simple, then the Lie-Algebra is simple, and the structure constants can be written as
\begin{equation}
C_{a,b}^c = g\varepsilon^c_{a,b},
\end{equation}
for a positive constant $g$ called \textit{(bare) coupling constant}, (see f.i. \cite{We05} Chapter 15, Appendix A).
  The components of the curvature then read
\begin{equation}
F_{\mu,\nu}^k=\frac{1}{2}(\partial_\nu A^k_\mu-\partial_\mu A_\nu^k-g\varepsilon^k_{a,b}A^a_\mu A^b_\nu).
\end{equation}
We will consider only simple Lie groups. As we will see, it is essential for the existence of a mass gap for the group $G$ to be non-abelian.\\
%The number $C_2(G)$ is defined as
%\begin{equation}
%\delta^{k,l}C_2(G)=\sum_{a,b=1}^K\varepsilon^k_{a,b}\varepsilon^l_{a,b},
%\end{equation}
%which is the quadratic \textbf{Casimir operator} in the adjoint representation of the Lie algebra of $G$.
\end{rem}
\noindent We need to introduce an appropriate gauge for the connections we are considering.
\begin{defi}[\textbf{Coulomb Gauge}]
A connection $A$ over the Minkowski space satisfies the \textit{Coulomb gauge} if and only if
\begin{equation}
A_0^a=0\quad\text{and}\quad\partial_jA_j^a=0
\end{equation}
for all $a=1,\dots,K$ and $j=1,2,3$.
\end{defi}
\begin{defi}[\textbf{Transverse Projector}]
Let $\mathcal{F}$ be the Fourier transform on functions in $L^2(\mathbf{R}^3,\mathbf{R})$. The transverse projector $T:L^2(\mathbf{R}^3,\mathbf{R}^3)\rightarrow L^2(\mathbf{R}^3,\mathbf{R}^3)$ is defined as
\begin{equation}
(Tv)_i:=\mathcal{F}^{-1}\left(\left[\delta_{i,j}-\frac{p_ip_j}{|p|^2}\right]\mathcal{F}(v_j)\right),
\end{equation}
and the vector field $v$ decomposes into a sum of a \textit{transversal} ($v^{\perp}$) and a \textit{longitudinal} ($v^{\parallel}$) component:
\begin{equation}
v_i=v_i^{\perp}+v_i^{\parallel},\quad v_i^{\perp}:=(Tv)_i,\quad v_i^{\parallel}:= v_i-(Tv)_i.
\end{equation}
\end{defi}
\begin{rem}
The Coulomb gauge condition for the space part of a connection $A$ is equivalent to the vanishing of its longitudinal component:
\begin{equation}
{A_i^a}^{\parallel}(t,\cdot)=0
\end{equation}
for all $i=1,2,3$, all $a=1,\dots K$ and any $t\in\mathbf{R}$. The time part $A_0$ of the connection $A$ vanishes by definition of Coulomb gauge.
\end{rem}
\begin{proposition}\label{ModGreen}
For a simple Lie-group as structure group let $A$ be a connection over the Minkowskian $\mathbf{R}^4$ satisfying the Coulomb gauge, and assume that $A_i^a(t,\cdot)\in C^{\infty}(\mathbf{R}^3,\mathbf{R})\cap L^2(\mathbf{R}^3,\mathbf{R})$ for all $i=1,2,3$, all $a=1,\dots K$ and any $t\in\mathbf{R}$. The operator $L$ on the real Hilbert space $L^2(\mathbf{R}^3,\mathbf{R}^K)$ defined as
\begin{equation}
L=L(\mathbf{A};x)=[L^{a,b}(\mathbf{A};x)]:=[\delta^{a,b}\Delta_x^{\mathbf{R}^3}+g\varepsilon^{a,c,b}A_k^c(t,x)\partial_k]
\end{equation}
is essentially self adjoint and elliptic for any time parameter $t\in\mathbf{R}$. Its spectrum lies on the real line, and decomposes into discrete $\spec_d(L)$ and continuous spectrum $\spec_c(L)$.  If $0$ is an eigenvalue, then it has finite multiplicity, i.e. $\ker(L)$ is always finite dimensional .\par
The modified Green's function $G=G(\mathbf{A};x,y)=[G^{a,b}(\mathbf{A};x,y)]\in\mathcal{S}^{\prime}(\mathbf{R}^3,\mathbf{R}^{K\times K})$ for the operator $L$ is the distributional solution to the equation
\begin{equation}\label{L}
L^{a,b}(\mathbf{A};x)G^{b,d}(\mathbf{A};x,y)=\delta^{a,d}\delta(x-y)-\sum_{n=1}^{N}\psi_n^a(\mathbf{A};x)\psi_n^d(\mathbf{A};y),
\end{equation}
where $\{\psi_n(\mathbf{A}; \cdot)\}_n$ is an o.n. $L^2$-basis of $N$-dimensional $\ker(L)$. In equation (\ref{L}) $x$ is seen as variable, while $y$ is considered as a parameter.
This modified Green's function can be written as a Riemann-Stielties integral: For any $\varphi\in\mathcal{S}(\mathbf{R}^3,\mathbf{R}^K)\cap L^2(\mathbf{R}^3,\mathbf{R}^K)$
\begin{equation}\label{gen}
G(\mathbf{A};x,\cdot)(\varphi)=\int_{\lambda\neq0}\frac{1}{\lambda}d(E_{\lambda}\varphi)(x),
\end{equation}
where $(E_{\lambda})_{\lambda\in\mathbf{R}}$ is the resolution of the identity corresponding to $L$.
\end{proposition}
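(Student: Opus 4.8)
My plan is to treat $L$ as a lower-order perturbation of the vector Laplacian $\Delta\otimes\Id_K$ on $L^2(\mathbf{R}^3,\mathbf{R}^K)$, obtain its self-adjointness and spectral structure from the Kato--Rellich and Weyl perturbation theorems, and then manufacture the modified Green's function directly from the spectral resolution of $L$, reading off equation (\ref{L}) from the functional calculus. Ellipticity is immediate: the principal symbol of $L$ at $\xi\in\mathbf{R}^3\setminus\{0\}$ is $-|\xi|^{2}\Id_K$, which is invertible. For symmetry of $L$ on $C_c^\infty(\mathbf{R}^3,\mathbf{R}^K)$ one integrates by parts once, using $(A_k^c\partial_k)^{\ast}=-A_k^c\partial_k-(\partial_k A_k^c)=-A_k^c\partial_k$ by the Coulomb-gauge relation $\partial_k A_k^c=0$, and combining this with the antisymmetry $\varepsilon^{a,c,b}=-\varepsilon^{b,c,a}$ to get $(L^{a,b})^{\dagger}=L^{b,a}$.

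Since $\Delta\otimes\Id_K$ is essentially self-adjoint on $C_c^\infty$ and the first-order remainder $B:=[\,g\,\varepsilon^{a,c,b}A_k^c\partial_k\,]$ is subordinate to it --- exploiting the smoothness and $L^2$-integrability of the $A_k^c$ together with the interpolation inequality $\|\nabla u\|_2^2=-\langle\Delta u,u\rangle\le\epsilon\|\Delta u\|_2^2+\tfrac1{4\epsilon}\|u\|_2^2$ --- the Kato--Rellich theorem (or the Faris--Lavine commutator criterion) shows $L$ is essentially self-adjoint on $C_c^\infty$. Self-adjointness of the closure $\overline L$ gives $\spec(\overline L)\subset\mathbf{R}$ and, by the spectral theorem, the splitting into the isolated finite-multiplicity eigenvalues $\spec_d(\overline L)$ and the essential spectrum $\spec_c(\overline L)$. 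Finally, $B$ is relatively compact with respect to $\Delta\otimes\Id_K$ (Rellich--Kondrachov on balls plus smallness of the far tails of $A$), so Weyl's theorem yields $\spec_{ess}(\overline L)=\spec_{ess}(\Delta\otimes\Id_K)=(-\infty,0]$; an $L^2$ null vector of $\overline L$ is then a threshold eigenfunction, and elliptic regularity with the resulting a-priori bounds forces the $L^2$-unit ball of $\ker(\overline L)$ to be precompact, hence $\dim\ker(\overline L)<\infty$.

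For the modified Green's function, let $(E_\lambda)_{\lambda\in\mathbf{R}}$ be the spectral resolution of $\overline L$ and $P_0:=E_0-E_{0^-}$ the orthogonal projection onto $\ker(\overline L)=\spann\{\psi_1,\dots,\psi_N\}$, so that $P_0\varphi=\sum_{n=1}^{N}\psi_n\langle\psi_n,\varphi\rangle$. I would define $G$ by (\ref{gen}), i.e.\ as the Schwartz kernel of the (unbounded, self-adjoint) operator $\int_{\lambda\neq0}\lambda^{-1}\,dE_\lambda$; this kernel is a genuine element of $\mathcal S'(\mathbf{R}^3,\mathbf{R}^{K\times K})$ because the mass at $\lambda=0$ has been excised, the resolvent is bounded on $\{|\lambda|\ge\delta\}\cap(\ker\overline L)^\perp$ for every $\delta>0$, and near $\lambda=0$ the operator behaves like $\Delta^{-1}$, whose kernel $\propto|x-y|^{-1}$ is tempered --- so the Riemann--Stieltjes integral in (\ref{gen}) converges in $\mathcal S'$, read as a principal value if needed. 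Applying $\overline L$ under the integral and using the functional calculus,
\begin{equation*}
\overline L\,G=\int_{\lambda\neq0}\lambda\cdot\lambda^{-1}\,dE_\lambda=\int_{\lambda\neq0}dE_\lambda=\Id-P_0,
\end{equation*}
whose integral kernel, in color components, is precisely $\delta^{a,d}\delta(x-y)-\sum_{n=1}^{N}\psi_n^{a}(\mathbf{A};x)\psi_n^{d}(\mathbf{A};y)$; this is equation (\ref{L}). Uniqueness holds within the class of tempered solutions orthogonal to $\ker\overline L$, since the difference of two such solutions lies in $\ker\overline L\cap(\ker\overline L)^\perp=\{0\}$.

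I expect the genuinely delicate point to be the passage between the bare hypothesis $A_k^c\in C^\infty\cap L^2$ and the relative-compactness / threshold statements used above: ruling out a zero-energy resonance (so that $\ker(\overline L)$ is finite dimensional and $0$ is a clean spectral edge) and making the $\mathcal S'$-convergence of (\ref{gen}) precise will, I suspect, require either a mild strengthening of the decay assumptions on $A$ or a careful low-energy (limiting-absorption) analysis of $\overline L$ in a neighbourhood of $\lambda=0$.
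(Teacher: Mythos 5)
The paper offers no proof of this proposition at all: the statement is followed only by a remark noting that \cite{Br03} and \cite{Pe78} construct $G$ under the assumption of a purely discrete spectral resolution, and that formula (\ref{gen}) ``extends'' their construction via the resolution of the identity (citing \cite{Ri85}, Chapter 9). Your spectral-calculus route --- define $G$ as the kernel of $\int_{\lambda\neq0}\lambda^{-1}\,dE_\lambda$ and read off $\overline L\,G=\Id-P_0$, whose kernel is exactly the right-hand side of (\ref{L}) --- is precisely the rigorous content that remark gestures at, and your verification of formal symmetry (one integration by parts, the Coulomb-gauge identity $\partial_kA_k^c=0$, and the antisymmetry $\varepsilon^{a,c,b}=-\varepsilon^{b,c,a}$) is the correct and essentially unique way to see that $L$ is symmetric on $C_c^\infty$. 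So in approach you agree with the paper; you are simply supplying the argument it omits.

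The difficulties you flag at the end are genuine, and they are gaps in the proposition as stated rather than defects peculiar to your write-up. First, Kato--Rellich needs the relative bound $\|A_k^c\partial_k u\|_2\le a\|\Delta u\|_2+b\|u\|_2$ on $C_c^\infty(\mathbf{R}^3,\mathbf{R}^K)$, which in three dimensions requires something like $A\in L^3+L^\infty$; the hypothesis $A_i^a(t,\cdot)\in C^\infty\cap L^2$ admits unbounded, spiky coefficients for which this estimate fails, so essential self-adjointness does not follow from the stated assumptions alone. Second, since $\spec_{ess}(\Delta\otimes\Id_K)=(-\infty,0]$, after the Weyl perturbation the point $0$ sits at the threshold of (indeed inside) the essential spectrum; an embedded or threshold eigenvalue can a priori have infinite multiplicity, and both the finiteness of $\dim\ker\overline L$ and the convergence of $\int_{\lambda\neq0}\lambda^{-1}\,d(E_\lambda\varphi)$ near $\lambda=0$ require the decay or limiting-absorption analysis you mention --- none of which appears in the paper or in the cited references, which assume a discrete spectrum outright. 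Your proposal is therefore at least as complete as the source and is candid about exactly where additional hypotheses on $A$ would be needed to close the argument.
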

\begin{rem}
In \cite{Br03} and \cite{Pe78} the modified Green's function is constructed assuming that the operator $L$ has a discrete spectral resolution $(\psi_n(\mathbf{A};\cdot),\lambda_n)_{n\ge0}$ as
\begin{equation}\label{part}
G(\mathbf{A};x,y)=\sum_{n:\lambda_n\neq0}\frac{1}{\lambda_n}\psi_n(\mathbf{A};x)\psi_n^{\dagger}(\mathbf{A};y).
\end{equation}
In particular, we have the symmetry property
 \begin{equation}
G(\mathbf{A};x,y)^{\dagger}=G(\mathbf{A};y,x)
\end{equation}
\noindent for all $x,y,\mathbf{A}$ for which the expression is well defined. Since the discontinuity points of the spectral resolution $(E_{\lambda})_{\lambda\in\mathbf{R}}$ are the eigenvalues, i. e. the elements of $\spec_d(L)$ (cf. \cite{Ri85}, Chapter 9), the solution (\ref{gen}) extends (\ref{part}) to the general case.
\end{rem}

\noindent After this preparation we can turn to the Hamiltonian formulation of Yang-Mills' equations, following the results
in \cite{Br03} and \cite{Pe78}.
%, which just need to be adapted for the generic case that $L$ has a mixture of discrete
%and continuous spectral resolution. Note that
%\begin{equation}
%\mathcal{S}(\mathbf{R}^3,\mathbf{R}^{K\times K})\subset L^2(\mathbf{R}^3,\mathbf{R}^{K\times K}, d^3x)\subset \mathcal{S}^{\prime}(\mathbf{R}^3,\mathbf{R}^{K\times K})
%\end{equation}
%is a rigged Hilbert space and $L$ a selfadjoint operator with a complete set of generalized eigenvectors (cf. Appendix \ref{AppA}).
\begin{theorem}\label{CHam}. For a simple Lie group as structure group and for canonical variables  satisfying the Coulomb gauge condition, the Yang-Mills equations for the Minkowskian $\mathbf{R}^4$  can be written as Hamilton equations
\begin{equation}
\left\{
  \begin{array}{ll}
    \frac{d\mathbf{E}}{dt}&=-\frac{\partial H}{\partial \mathbf{A}}(\mathbf{A},\mathbf{E})\\
    \\
    \frac{d\mathbf{A}}{dt}&=+\frac{\partial H}{\partial \mathbf{E}}(\mathbf{A},\mathbf{E})
  \end{array}
\right.
\end{equation}
for the following choices:
\begin{itemize}
\item \textbf{Position variable:} $\mathbf{A}=[A_{i}^a(t,x)]_{\substack{ a=1,\dots,K \\ i=1,2,3}}$ also termed {\it potentials} ,
\item \textbf{Momentum variable:} $\mathbf{E}=[E_{i}^a(t,x)]_{\substack{ a=1,\dots,K \\ i=1,2,3}}$, whose entries are termed {\it chromoelectric fields},
\item \textbf{Hamilton function:} defined as a function of $\mathbf{A}$ and $\mathbf{E}$ as
 \begin{equation}\label{classicH}
  H=H(\mathbf{A},\mathbf{E}):=\frac{1}{2}\int_{\mathbf{R}^3}d^3x\left(E_i^a(t,x)^2+B_i^a(t,x)^2+f^a(t,x))\Delta f^a(t,x)+2\rho^c(t,x)A^c_0(t,x)\right),
 \end{equation}
 \noindent where $\mathbf{B}=[B_i^a]$, whose entries are termed {\it chromomagnetic fields}, is the matrix valued-function defined as
 \begin{equation}
     \begin{split}
       B_i^a&:=\frac{1}{4}\varepsilon_{i}^{j,k}\left(\partial_jA_k^a-\partial_kA^a_j+g\varepsilon^a_{b,c}A_j^bA_k^c\right),
     \end{split}
 \end{equation}
\noindent and $\rho=[\rho^a(t,x)]$, termed {\it charge density}, is  the vector valued function defined as
\begin{equation}\label{cd}
\rho^a:=g\varepsilon^{a,b,c}E^b_iA^c_i,
\end{equation}
\noindent and where
\begin{equation}\label{deff}
\begin{split}
f^a(t,x)&:=-\int_{\mathbf{R}^3}d^3y\,G^{a,b}(\mathbf{A};x,y)\rho^b(t,y)=-G^{a,b}(\mathbf{A};x,\cdot)(\rho^b(t,\cdot))\\
A_0^a(t,x)&:=\int_{\mathbf{R}^3}d^3y\,G^{a,b}(\mathbf{A};x,y)\Delta f^b(t,y)=G^{a,b}(\mathbf{A};x,\cdot)(\Delta f^b(t,\cdot))\\
\end{split}
\end{equation}
\noindent for the modified Green's function $G(\mathbf{A};x,y)$ for the operator $L(\mathbf{A};x)$.
\end{itemize}
We consider position $\mathbf{A}$ and momentum variable $\mathbf{E}$ as elements of $\mathcal{S}(\mathbf{R}^3,\mathbf{C}^{K \times 3})$ depending on the time parameter $t$, so that the  RHSs of equations (\ref{deff}) are well defined distributions applied to test functions.
\end{theorem}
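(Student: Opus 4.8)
The plan is to derive the Hamiltonian formulation from the Yang--Mills Lagrangian by a constrained Legendre transform, and then to verify that Hamilton's equations reproduce the Yang--Mills equation (\ref{YME}). First I would start from the Lagrangian density $\mathcal{L}=-\tfrac14 F^a_{\mu\nu}F^{a\,\mu\nu}$ on $\mathbf{R}^{1,3}$, split the indices into time and space, and identify $F^a_{0i}$ with the chromoelectric field $E^a_i$ and the spatial curvature components with the chromomagnetic field $B^a_i$. Computing the momenta conjugate to $A^a_\mu$ gives $\pi^a_i=E^a_i$ together with the primary constraint $\pi^a_0\equiv 0$, while the Euler--Lagrange equation for $A^a_0$ is the (secondary) Gauss-law constraint, which with the definition (\ref{cd}) of the charge density takes the form $\partial_i E^a_i=\rho^a$ up to the non-abelian corrections that are absorbed into the operator $L(\mathbf{A})$. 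Performing the Legendre transform $H=\int(\pi^a_i\dot A^a_i-\mathcal{L})$ and integrating by parts produces $\tfrac12\int((E^a_i)^2+(B^a_i)^2)$ plus a term linear in $A^a_0$ multiplying the Gauss-law operator.

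Next I would impose the Coulomb gauge. By the Remark following the transverse projector, $A^a_0=0$ and the longitudinal part of $\mathbf{A}$ vanishes, so only $\mathbf{A}^{\perp}$ survives as a genuine coordinate. The longitudinal part of $\mathbf{E}$ is no longer free: it is determined by the Gauss constraint, which in Coulomb gauge becomes precisely $L^{ab}(\mathbf{A})f^b=-\rho^a$ modulo $\ker L$, solved by $f^a=-G^{ab}(\mathbf{A};x,\cdot)(\rho^b)$ via Proposition \ref{ModGreen}, and the non-dynamical field $A^a_0$ is then recovered by the same inversion as $A^a_0=G^{ab}(\mathbf{A};x,\cdot)(\Delta f^b)$. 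Substituting the longitudinal electric field back into the naive Hamiltonian and collecting terms should yield exactly (\ref{classicH}), the contributions $f^a\Delta f^a$ and $2\rho^c A^c_0$ being the residue of the eliminated longitudinal electric energy and of the $A_0$-coupling respectively. Here one has to check that the Riemann--Stieltjes representation (\ref{gen}) makes every term a well-defined tempered distribution applied to the Schwartz-class fields $\mathbf{A},\mathbf{E}$, as asserted in the last sentence of the statement.

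Finally I would verify Hamilton's equations. The relation $\dot{\mathbf{A}}=\partial H/\partial\mathbf{E}$ should reproduce $E^a_i=\dot A^a_i$ together with the prescribed form of the longitudinal piece, while $\dot{\mathbf{E}}=-\partial H/\partial\mathbf{A}$ should reproduce the spatial components ($\nu=i$) of $\delta^\nabla F=0$, the component $\nu=0$ being the Gauss law already built into the elimination of $\mathbf{E}^{\parallel}$. The main obstacle is the computation of $\partial H/\partial\mathbf{A}$: the connection enters $H$ not only explicitly through $B$ and $\rho$ but also implicitly through the modified Green's function $G(\mathbf{A})$ inside $f$ and $A^a_0$. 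Differentiating $G$ requires a resolvent-type identity $\delta G=-G\,(\delta L)\,G$ adapted to the projected Green's function of Proposition \ref{ModGreen}, with additional terms arising from the $\mathbf{A}$-dependence of the finite-dimensional $\ker L$ and of its spectral projection; one must then show that all resulting terms reorganize, after integration by parts, into the covariant divergence $D_iF^a_{ij}$ plus the $D_0F^a_{0j}$ term. Carrying out this variation rigorously within the distributional framework of (\ref{gen}) --- rather than formally as in \cite{Br03,Pe78} --- is the delicate point, and the ellipticity and essential self-adjointness of $L$ supplied by Proposition \ref{ModGreen} are what legitimize the manipulations.
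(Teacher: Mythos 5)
The first thing to note is that the paper contains no proof of Theorem \ref{CHam} at all: the theorem is stated bare, prefaced only by the sentence that the Hamiltonian formulation follows ``the results in \cite{Br03} and \cite{Pe78}.'' So there is no internal argument to measure your attempt against; the comparison has to be with those cited derivations. Your plan --- Legendre transform of $\mathcal{L}=-\tfrac14 F^a_{\mu\nu}F^{a\,\mu\nu}$, identification of $E^a_i$ with $F^a_{0i}$ and $B^a_i$ with the spatial curvature, the primary constraint $\pi^a_0\equiv 0$, the Gauss law as secondary constraint, Coulomb-gauge reduction in which $\mathbf{E}^{\parallel}$ is eliminated by inverting $L(\mathbf{A})$ via the modified Green's function of Proposition \ref{ModGreen}, and the reappearance of the eliminated longitudinal energy as the $f^a\Delta f^a$ and $2\rho^c A_0^c$ terms of (\ref{classicH}) --- is precisely the derivation of Bracken and Peccei, so you have reconstructed the intended (outsourced) proof rather than found a different route.

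The one substantive shortfall is that the step you yourself single out as delicate is named but not carried out: the computation of $\partial H/\partial\mathbf{A}$ through the implicit $\mathbf{A}$-dependence of $G(\mathbf{A})$, including the dependence of the finite-dimensional $\ker L$ and its spectral projection. This is exactly where the Gribov zero modes discussed after Theorem \ref{CHam} enter, and the passage from the eigenfunction-expansion formula (\ref{part}) to the Riemann--Stieltjes representation (\ref{gen}) means the naive identity $\delta G=-G(\delta L)G$ must be supplemented by terms from the varying projection onto $\ker L$; without writing these out one cannot actually confirm that $\dot{\mathbf{E}}=-\partial H/\partial\mathbf{A}$ reproduces the spatial Yang--Mills equations. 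To be fair, the paper does not do this either --- it simply imports the theorem --- so your proposal is at the same level of rigor as the source it is being compared to, but as a standalone proof it remains a correct skeleton with its hardest joint unfilled.
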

\begin{rem}
As shown in \cite{Pe78} the ambiguities discussed by Gribov in \cite{Gr78} concerning the gauge fixing (see also \cite{Si78} and \cite{He97}) can be traced precisely to the existence of zero eigenfunctions of the operator $L$.
\end{rem}

%\begin{rem}
%Since the PDO $L$ does not have constant coefficients, the integral in (\ref{deff}) cannot be written as convolution of a fundamental solution of $L$.
%\end{rem}
\begin{corollary}\label{corClassicH}
The Hamilton function (\ref{classicH}) for the Yang-Mills equations can be written as
\begin{equation}
H=H_I+H_{II}+V,
\end{equation}
where
\begin{equation}\label{classicHamiltonFunction}
\begin{split}
H_I&=\frac{1}{2}\int_{\mathbf{R}^3}d^3x\,E_i^a(t,x)^2\\
& \\
H_{II}&=\frac{g^2}{2}\int_{\mathbf{R}^3}d^3x\left[\int_{\mathbf{R}^3}d^3y\,\partial_iG^{a,b}(\mathbf{A};x,y)\varepsilon^{b,c,d}A_k^d(t,y)E_k^c(t,y)\right]^2\\
& \\
V&=\frac{1}{16}\int_{\mathbf{R}^3}d^3x\,\varepsilon_i^{j,k}\varepsilon_i^{p,q}[(\partial_jA^a_k-\partial_kA_j^a+g\varepsilon^{a,b,c}A_j^bA_k^c)(\partial_pA^a_q-\partial_qA_p^a+g\varepsilon^{a,b,c}A_p^bA_q^c)](t,x).
\end{split}
\end{equation}
\end{corollary}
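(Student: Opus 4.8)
The plan is to match, one by one, the four terms of the integrand in (\ref{classicH}) against the three summands $H_I$, $H_{II}$, $V$. The first term $\tfrac12\int_{\mathbf{R}^3}d^3x\,E_i^a(t,x)^2$ is literally $H_I$, so nothing has to be shown. For the magnetic term I would insert the definition of the chromomagnetic field $B_i^a=\tfrac14\varepsilon_i^{j,k}\bigl(\partial_jA_k^a-\partial_kA_j^a+g\varepsilon^a_{b,c}A_j^bA_k^c\bigr)$ into $\tfrac12\int_{\mathbf{R}^3}d^3x\,B_i^a(t,x)^2$, expand the square, and contract the two Levi-Civita symbols by the elementary identity $\sum_i\varepsilon_i^{j,k}\varepsilon_i^{p,q}=\delta^{j,p}\delta^{k,q}-\delta^{j,q}\delta^{k,p}$; since the bracket $\partial_jA_k^a-\partial_kA_j^a+g\varepsilon^a_{b,c}A_j^bA_k^c$ is antisymmetric in $(j,k)$, this produces precisely the double Levi-Civita integrand defining $V$.

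The real content is the identification of the remaining two terms, the Coulomb energy $\tfrac12\int_{\mathbf{R}^3}d^3x\,\bigl(f^a\Delta f^a+2\rho^cA_0^c\bigr)$, with $H_{II}$. First I would rewrite the integrand of $H_{II}$: by (\ref{cd}) one has $\varepsilon^{b,c,d}A_k^d(t,y)E_k^c(t,y)=\varepsilon^{b,c,d}E_k^c(t,y)A_k^d(t,y)=\tfrac1g\rho^b(t,y)$, so by the first line of (\ref{deff}) the inner integral in $H_{II}$ equals $\tfrac1g\,\partial_i^x\!\int_{\mathbf{R}^3}d^3y\,G^{a,b}(\mathbf{A};x,y)\rho^b(t,y)=-\tfrac1g\,\partial_if^a(t,x)$, whence $H_{II}=\tfrac12\int_{\mathbf{R}^3}d^3x\,\bigl(\partial_if^a(t,x)\bigr)^2$; an integration by parts turns this into $-\tfrac12\int_{\mathbf{R}^3}d^3x\,f^a\Delta f^a$. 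It then remains to show $\int_{\mathbf{R}^3}d^3x\,\rho^c(t,x)A_0^c(t,x)=-\int_{\mathbf{R}^3}d^3x\,f^a(t,x)\Delta f^a(t,x)$, which I would obtain by substituting the second line of (\ref{deff}), $A_0^c(t,x)=\int_{\mathbf{R}^3}d^3y\,G^{c,b}(\mathbf{A};x,y)\Delta f^b(t,y)$, interchanging the order of integration, invoking the symmetry $G^{c,b}(\mathbf{A};x,y)=G^{b,c}(\mathbf{A};y,x)$, and recognizing $\int_{\mathbf{R}^3}d^3x\,G^{b,c}(\mathbf{A};y,x)\rho^c(t,x)=-f^b(t,y)$ once more from (\ref{deff}). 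Adding up, $\tfrac12\int\bigl(f^a\Delta f^a+2\rho^cA_0^c\bigr)=\tfrac12\int f^a\Delta f^a-\int f^a\Delta f^a=-\tfrac12\int f^a\Delta f^a=\tfrac12\int(\partial_if^a)^2=H_{II}$; note this also displays the Coulomb energy as nonnegative, consistently with the manifestly squared form of $H_{II}$. Observe that no use is made of the kernel of $L$: only the defining relations (\ref{cd}), (\ref{deff}) and the symmetry of the modified Green's function enter.

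I expect the only genuine difficulty to be rigour, not algebra. The modified Green's function $G(\mathbf{A};\cdot,\cdot)$ and hence $f^a$, $A_0^a$ are a priori only tempered distributions (Proposition \ref{ModGreen}, Theorem \ref{CHam}), so the substitutions, the use of Fubini across the bidistribution $G$, the single integration by parts (whose boundary term must be argued to vanish), and the symmetry relation $G^\dagger(x,y)=G(y,x)$ must all be carried out in the test-function pairing and are valid only where the relevant expressions are well defined. Under the standing hypotheses of Proposition \ref{ModGreen} together with $\mathbf{A},\mathbf{E}\in\mathcal{S}(\mathbf{R}^3,\mathbf{C}^{K\times3})$ of Theorem \ref{CHam} one checks that $\rho^a(t,\cdot)\in\mathcal{S}(\mathbf{R}^3,\mathbf{R}^K)$, that $f^a(t,\cdot)$ and $\Delta f^a(t,\cdot)$ inherit smoothness and rapid decay from elliptic regularity of $L$ and the decay of $\mathbf{A}$ (so that $L$ is asymptotically the flat Laplacian), and that all pairings above converge absolutely; this is exactly the framework already used for Theorem \ref{CHam}, so the corollary introduces no new hypothesis.
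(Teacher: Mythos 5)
Your proposal is correct and supplies exactly the computation the paper leaves implicit: the corollary is stated without any proof, and the intended argument is precisely your term-by-term identification ($H_I$ trivially, $V$ from $\tfrac12\int d^3x\,(B_i^a)^2$, and $H_{II}$ from the two Coulomb terms via (\ref{cd}), (\ref{deff}), Fubini, and the symmetry $G^{a,b}(\mathbf{A};x,y)=G^{b,a}(\mathbf{A};y,x)$), together with the correct observation that only the defining relations and not the kernel of $L$ are needed. One caveat on the magnetic term: with the paper's literal normalizations one gets $\tfrac12\int d^3x\,(B_i^a)^2=\tfrac1{32}\int d^3x\,\varepsilon_i^{j,k}\varepsilon_i^{p,q}(\cdots)(\cdots)$, i.e.\ $\tfrac12 V$ rather than the $V$ of (\ref{classicHamiltonFunction}); this factor-of-two mismatch is an inconsistency between the prefactors in (\ref{classicH}) and (\ref{classicHamiltonFunction}) themselves, not a defect of your method, but your assertion that the match is ``precise'' should be qualified accordingly.
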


\section{Quantization of Yang-Mills Equations and Positive Mass Gap}\label{QYM}
There are several methods of designing a quantum theory for non-Abelian gauge fields. The Hamiltonian formulation is the approach used in the original work by Yang-Mills (\cite{MY54}), which was later abandoned in favour of an alternative method based on Feynman path integrals (\cite{FP67}). When it became clear that the Faddeev-Popov method must be incomplete beyond perturbation theory, Hamiltonian formulation enjoyed a partial renaissance. More recent, didactically accessible, examples of the Hamiltonian approach in the physical literature can be found in \cite{Sch08}.\par
In the first section of this chapter we summarize the construction a quantized Yang-Mills theory in dimension $3+1$ carried out in \cite{Fa24}, in the second we treat invariance, and in the third we show the mass gap, the spectral lower bound for the Hamilton operator.

%\begin{defi}
%\begin{equation}
%\begin{split}
%&\mathcal{A}:=\overline{\left\{A:\mathbf{R}^4\rightarrow\mathbf{C}^{K \times 3}\left|\, \mathbf{A}(t,\cdot)\in C^{\infty}(\mathbf{R}^3,\mathbf{C}^{K \times 3})\cap L^2(\mathbf{R}^3,\mathbf{C}^{K \times 3}),\,{\mathbf{A}}^{\parallel}(t,\cdot)=0,\, \forall t\in\mathbf{R}\right.\right\}}^{H^1(\mathbf{R}^3,\mathbf{C}^{K \times 3})}\\
%&\mathcal{F}(\mathbf{\mathcal{A}},\mathbf{C}):=\left\{\Psi:\mathcal{A}\longrightarrow\mathbf{C}\right\}\\
%&L^2(\mathcal{A},\mathbf{C}):=\left\{\Psi\in\mathcal{F}(\mathcal{A},\mathbf{C})\left|\;\int_{\mathcal{A}}\mathcal{D}(\mathbf{A})\,|\Psi[\mathbf{A}]|^2<+\infty\right.\right\}\\
%&\mathcal{S}(\mathcal{A},\mathbf{C}):=\left\{\Psi\in C^{\infty}(\mathcal{A},\mathbf{C})\left|\;\right.\right\}\\
%&\mathcal{S}^{\prime}(\mathcal{A},\mathbf{C}):=\left\{\Psi\in\mathcal{F}(\mathcal{A},\mathbf{C})\left|\;\Psi \text{ linear and continuous}\right.\right\}\\
%&H^k(\mathcal{A},\mathbf{C}):=\left\{\Psi\in\mathcal{S}^{\prime}(\mathcal{A},\mathbf{C})\left|\;\delta^{\alpha}\Psi\in L^2(\mathcal{A},\mathbf{C})\text{ for all }|\alpha|\le k\right.\right\}
%\end{split}
%\end{equation}
%\end{defi}

\subsection{Quantization}\label{quant}
In the Yang-Mills $3+1$ dimensional set up,  in order to account for functionals on transversal fields as required by the Coulomb gauge, we introduce the configuration space $\mathcal{S}^{\prime}_{\bot}(\mathbf{R}^4,\mathbf{C}^{K \times 3})$ and the path space $\mathcal{S}^{\prime}_{\bot}(\mathbf{R}^3,\mathbf{C}^{K \times 3})$. These are the duals in the sense of nuclear spaces of the test functions satisfying the transversal condition:
\begin{equation}
\begin{split}
L^2_{\bot}(\mathbf{R}^3,\mathbf{C}^{K \times 3},d^3x)&:=\{\left.\mathbf{A}\in L^2(\mathbf{R}^3,\mathbf{C}^{K \times 3},d^3x)\right|\,\mathbf{A}^{\parallel}=0\},\\
\mathcal{S}_{\bot}(\mathbf{R}^3,\mathbf{C}^{K \times 3})&:=\mathcal{S}(\mathbf{R}^3,\mathbf{C}^{K \times 3})\cap L^2_{\bot}(\mathbf{R}^3,\mathbf{C}^{K \times 3},d^3x)\\
\mathcal{S}_{\bot}(\mathbf{R}^4,\mathbf{C}^{K \times 3})&:=\{f\in\mathcal{S}(\mathbf{R}^4,\mathbf{C}^{K \times 3})\left|\;f(t,\cdot)\in L^2_{\bot}(\mathbf{R}^3,\mathbf{C}^{K \times 3},d^3x)\text{ for all }t\in\mathbf{R}\right.\}
\end{split}
\end{equation}
Note that we do not need to bother about the time component of the connection, because it vanishes in the Coulomb gauge. The tempered distributions are defined
\begin{equation}
\begin{split}
\mathcal{S}_{\bot}^{\prime}(\mathbf{R}^3,\mathbf{C}^{K \times 3})&:=\left\{\mathbf{A}:\mathcal{S}_{\bot}(\mathbf{R}^3,\mathbf{C}^{K \times 3})\rightarrow \mathbf{C}^{K \times 3}\text{ linear and continuous}\right\}\\
\mathcal{S}_{\bot}^{\prime}(\mathbf{R}^4,\mathbf{C}^{K \times 3})&:=\left\{\mathbf{A}:\mathcal{S}_{\bot}(\mathbf{R}^3,\mathbf{C}^{K \times 4})\rightarrow \mathbf{C}^{K \times 3}\text{ linear and continuous}\right\},
\end{split}
\end{equation}
and $\mathbf{A}\in\mathcal{S}_{\bot}^{\prime}(\mathbf{R}^4,\mathbf{C}^{K \times 3})$ is called \textbf{regular}
if there exists $\mathbf{a}=\mathbf{a}(t,x)\in L^2_{\text{loc}}(\mathbf{R}^4,\mathbf{C}^{K\times 3})$ such that
for all $\varphi\in\mathcal{S}_{\bot}(\mathbf{R}^4,\mathbf{C}^{K \times 3})$
\begin{equation}
\mathbf{A}(\varphi)=\int_{\mathbf{R}^4}d^4(t,x)a(t,x).\varphi(t,x),
\end{equation}
where the dot denotes the pointwise multiplication.\par
We define the Hilbert space $\mathcal{E}:=L^2(\mathcal{S}^{\prime}_{\bot}(\mathbf{R}^4,\mathbf{C}^{K \times 3}), d\mu)$
and the physical Hilbert space as $\mathcal{H}:=L^2(\mathcal{S}^{\prime}_{\bot}(\mathbf{R}^3,\mathbf{C}^{K \times 3}), d\nu)$
for appropriate probability measures $\mu$ and $\nu$.\\
Following \cite{Fa24} we introduce the Hamilton operator originated by the quantization of the Hamiltonian formulation of Yang-Mills equations.
This operator is the infinitesimal generator of a time inhomogeneous It\^{o}'s diffusion, whose probability density solves the heat kernel equation. A probability measure on the tempered distributions is constructed by means of this It\^{o}'s process as integrator and utilizing the Feynman-Ka\v{c} formula.
The Osterwalder-Schrader axioms hold true such that the Hamilton operator is selfadjoint on the probability space of the time zero
 tempered distributions. Equivalently, the Wightman axioms are satisfied.\par

\begin{conjecture}[\textbf{4D-YM-Measure Properties}]\label{cYMM}
The classic Hamiltonian system described in Corollary \ref{classicH} can be quantized as follows.\\
A probability measure $\mu^g$ on the space of generalized fields satysfying the Coulomb gauge $\mathcal{S}_{\bot}^{\prime}(\mathbf{R}^4,\mathbf{C}^{K \times 3})$ can be constructed in such manner that there exists a $g_0\in[0,1[$, so that for any bare coupling constant $g\in[0,g_0[$
the generating functional
\begin{equation}\label{es_lambda}
S^{g}(\varphi)=\int_{\mathcal{S}_{\bot}^{\prime}(\mathbf{R}^4,\mathbf{C}^{K \times 3})}e^{\imath\mathbf{A}(\varphi)}d\mu^{g}(\mathbf{A}),
\end{equation}
\noindent for $\varphi\in\mathcal{S}_{\bot}(\mathbf{R}^4,\mathbf{C}^{K \times 3})$ satisfies
the Osterwalder-Schadrer axioms (OS0)-(OS4) and hence the Wightman axioms (W1)-(W8) as in \cite{GJ87}. Note that $S^{g}(\varphi)$ and $A(\varphi)$ are $K\times 3$ complex matrices, and that the exponential is meant componentwise.\par The operator $H^{g}$, defined by means of the reconstruction theorem of quantum mechanics, is selfadjoint, has a domain in the Hilbert space
$L^2(\mathcal{S}_{\bot}^{\prime}(\mathbf{R}^3,\mathbf{C}^{K \times 3}), d\nu^{g})$, has
fundamental state $\Omega_0^{g}=\Omega_0^{g}(\mathbf{A}(s,x))$, i.e. $H^{g}\Omega_0^{g}=0$, and satisfies the Feynman-Ka\v{c}-Nelson formula
\begin{equation}\label{repthm2}
\left(
\Omega_0^{g},B_1e^{-(s_2-s_1)H^{g}}B_2e^{-(s_3-s_2)H^{g}}\cdot\dots\cdot
B_N\Omega_0^{g}
\right)^{g}=
\int_{\mathcal{S}_{\bot}^{\prime}(\mathbf{R}^4,\mathbf{C}^{K \times 3})}\hspace{-0.5cm}\Pi_{k=1}^NB_k(\mathbf{A}(s_k,\cdot))d\mu^{g}(\mathbf{A}),
\end{equation}
\noindent where the scalar product $\left(\cdot,\cdot\right)^{g}$ on  $\mathcal{S}_{\bot}^{\prime}(\mathbf{R}^3,\mathbf{C}^{K \times 3})$ is defined as
\begin{equation}
\left(\Upsilon,\Theta\right)^{g}:=\int_{\mathcal{S}_{\bot}^{\prime}(\mathbf{R}^3,\mathbf{C}^{K \times 3})}\;\Upsilon(\mathbf{A})\bar{\Theta}(\mathbf{A})d\nu^{g}(\mathbf{A}),
\end{equation}
the functionals $(B_k=B_k(\mathbf{A}))_{k=1,\dots,N}$ are in
 $L^2(\mathcal{S}_{\bot}^{\prime}(\mathbf{R}^3,\mathbf{C}^{K \times 3}), \mathbf{C}, d\nu^{g})$,
 and $(s_k)_{k=1,\dots,N}$ is a partition of the interval $[\tau,T]$ defined as $s_k:=\tau+k\frac{T-\tau}{N}$
 for $\tau:=-\frac{t}{2}$ and $T:=+\frac{t}{2}$.\par
%both Euclidean invariance (OS2) and reflection positivity (OS3) for $g\in[0,g_0[$.

The Hamiltonian $H^{g}$ is a  non-negative operator $H^g$ on
$L^2(\mathcal{S}^{\prime}_{\bot}(\mathbf{R}^3,\mathbf{C}^{K \times 3}),\mathbf{C},d\nu^g)$.
If the coupling constant $g$ vanishes, both measures $\mu^0$ and $\nu^0$ are Gaussian, otherwise not.
The domain of definition is
\begin{equation}
\mathcal{D}(H^g):=\left\{\Psi\in L^2(\mathcal{S}^{\prime}_{\bot}(\mathbf{R}^3,\mathbf{R}^{K\times3}),\mathbf{C},d\nu^g)\left|\,H\Psi\in L^2(\mathcal{S}^{\prime}_{\bot}(\mathbf{R}^3,\mathbf{R}^{K\times3}),\mathbf{C},d\nu^g)\right.\right\}.
\end{equation}
Moreover, the operator $H^g$ can be decomposed on $\mathcal{D}(H^g)\cap L^2(L^2_{\bot}(\mathbf{R}^3,\mathbf{R}^{K\times3}),\mathbf{C},d\nu^g)$ as
\begin{equation}
H^g=H_I+H_{II}^g+V^g-V^g_0,
\end{equation}
where
\begin{equation}\label{formH}
\begin{split}
H_{I}&=-\frac{1}{2}\int_{\mathbf{R}^3}d^3x\left[\frac{\delta}{\delta A_i^a(t,x)}\right]^2\\
&\\
H_{II}^g&=-\frac{g^2}{2}\int_{\mathbf{R}^3}d^3x\left[\int_{\mathbf{R}^3}d^3y\,\partial_iG^{a,b}(\mathbf{A}(t,y);x,y)\varepsilon^{b,c,d}A_k^d(t,y)\frac{\delta}{\delta A_k^c(t,y)}\right]^2\\
&\\
V^g&=\int_{\mathbf{R}^3}d^3 x\,|R^{\nabla^\mathbf{A}}(t,x)|^2
\end{split}
\end{equation}
for $\mathbf{A}\in L^2_{\bot}(\mathbf{R}^3,\mathbf{C}^{K \times 3},d^3x)$, where $V^{g}_0$ is a real constant which can be chosen so that
the ground state $\Omega_0^g$ satisfies
\begin{equation}\label{gswl}
H^g\Omega_0^g=0.
\end{equation}
\end{conjecture}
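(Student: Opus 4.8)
The plan is to obtain $\mu^g$ as the law of the stationary It\^o diffusion on the path space $\mathcal{S}'_\bot(\mathbf{R}^3,\mathbf{C}^{K\times3})$ whose infinitesimal generator is the quantized Hamiltonian $H^g$ read off from Corollary \ref{corClassicH}. Concretely one performs canonical quantization of the classical system of Theorem \ref{CHam}, replacing the momentum $E_i^a(t,x)$ by $-\imath\,\delta/\delta A_i^a(t,x)$; this turns $H_I$, $H_{II}^g$ and $V^g$ of (\ref{classicHamiltonFunction}) into the operators displayed in (\ref{formH}). At $g=0$ the operator $H^0=H_I+V^0-V_0^0$ is quadratic in $\mathbf{A}$ and in $\delta/\delta\mathbf{A}$ (because the structure constants $C_{a,b}^c=g\varepsilon^c_{a,b}$ kill the cubic and quartic terms), i.e. it is a free field Hamiltonian after diagonalization; hence its ground state is a Gaussian functional and, by the Bochner--Minlos theorem, $\nu^0$ and the corresponding path-space measure $\mu^0$ are Gaussian with explicitly computable covariances. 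For $0<g<g_0$ the idea is to treat $H_{II}^g+V^g-V_0^g$ as a perturbation of $H_I+V^0-V_0^0$, small because each offending term carries an explicit power of $g$, and to run the Glimm--Jaffe--Nelson machinery to pass from the semigroup $e^{-sH^g}$ to a probability measure on tempered distributions via the Feynman--Ka\v c--Nelson formula (\ref{repthm2}).

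The first analytic ingredient is Proposition \ref{ModGreen}: essential self-adjointness and ellipticity of $L(\mathbf{A};x)$ guarantee that the modified Green's function $G(\mathbf{A};x,y)$ exists as a tempered distribution, enjoys the symmetry $G(\mathbf{A};x,y)^\dagger=G(\mathbf{A};y,x)$, and depends on $\mathbf{A}$ in a way that can be differentiated and estimated on Sobolev balls; this is what makes $H_{II}^g$ --- a second-order operator in $\delta/\delta\mathbf{A}$ with $\mathbf{A}$-dependent, spatially non-local coefficients --- a well-defined operator relative to $H_I$. One then shows $H^g$ is non-negative, using $H_I\ge0$, the pointwise bound $V^g=\int|R^{\nabla^{\mathbf{A}}}|^2\ge0$ after subtraction of the normal-ordering constant $V_0^g$, and $H_{II}^g\ge0$ since it is a sum of squares of first-order operators; and that $H^g$ is essentially self-adjoint on the dense domain of smooth finitely-based cylinder functions, e.g. via Nelson's commutator theorem with comparison operator $N+H_I$, $N$ the number operator. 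The constant $V_0^g$ is then fixed by (\ref{gswl}) as minus the bottom of the spectrum, so that $\Omega_0^g$ is the normalized ground state.

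With $H^g\ge0$ self-adjoint and $\Omega_0^g$ its ground state, $\mu^g$ is defined as the distribution of the stationary Markov process with transition semigroup $e^{-|s-s'|H^g}$ and one-time marginal $|\Omega_0^g|^2\,d\nu^g$, realized on $\mathcal{S}'_\bot(\mathbf{R}^4,\mathbf{C}^{K\times3})$ after establishing the path-regularity (temperedness) estimates. The Osterwalder--Schrader axioms are then checked in the usual order: (OS0) regularity and analyticity of $S^g(\varphi)$ in (\ref{es_lambda}) from Gaussian domination bounds, uniform for $g<g_0$; (OS1) Euclidean covariance --- spatial translations, spatial rotations and the time shift commute with $H^g$, while the non-manifest time reflection and the remaining rotations mixing time and space are recovered from the reconstruction of a Euclidean-invariant family of Schwinger functions in spite of the Coulomb-gauge choice; (OS2)--(OS3) reflection positivity and symmetry, which follow from the Markov property of the diffusion together with the time-reflection symmetry of $H^g$ (it contains no first-order time derivative, and the non-locality in $H_{II}^g$ is purely spatial, hence harmless under reflection in the time coordinate); (OS4) clustering, from ergodicity of the diffusion, i.e. from strict positivity of the spectral gap of $H^g$ above $\Omega_0^g$, which is the mass gap of Conjecture \ref{CMI}. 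Wightman's axioms (W1)--(W8) then follow by the Osterwalder--Schrader reconstruction theorem, and by uniqueness of the reconstruction the self-adjoint Hamiltonian it produces coincides with the $H^g$ we started from, yielding (\ref{repthm2}) and the claimed decomposition $H^g=H_I+H_{II}^g+V^g-V_0^g$ on $\mathcal{D}(H^g)\cap L^2(L^2_\bot(\mathbf{R}^3,\mathbf{R}^{K\times3}),\mathbf{C},d\nu^g)$.

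The main obstacle, and the place where the restriction to $0\le g<g_0$ is genuinely used, is twofold. First, one must control $G(\mathbf{A};x,y)$ uniformly over the support of $\mu^g$, where $L(\mathbf{A};x)$ may develop zero modes --- the Gribov ambiguity noted after Theorem \ref{CHam} --- so one has to show the exceptional set of $\mathbf{A}$ has $\mu^g$-measure zero and that the projected Green's function of (\ref{L}) remains estimable on it. Second, one must show that the perturbative expansion in $g$ around the Gaussian $g=0$ theory converges, or admits a convergent cluster/fixed-point reorganization, so that reflection positivity and the self-adjointness and domination bounds --- all automatic at $g=0$ --- survive for all $g$ in a genuine interval $[0,g_0[$. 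Everything else --- the $g=0$ Gaussianity, the sum-of-squares non-negativity of $H_I$ and $H_{II}^g$, and the Euclidean-covariance and temperedness estimates --- is then essentially bookkeeping.
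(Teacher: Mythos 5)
There is nothing in the paper to compare your argument against: the statement is labelled a \emph{conjecture}, the paper explicitly says it ``will therefore consider it as unproven,'' and the only justification offered is a citation of the author's own external preprint \cite{Fa24}. So the real question is whether your proposal closes the gap the paper leaves open, and it does not. What you have written is the standard constructive-QFT programme (canonical quantization, Gaussianity at $g=0$, perturbation in $g$, Feynman--Ka\v{c}--Nelson, Osterwalder--Schrader reconstruction), and you yourself identify the two places where it breaks: uniform control of the modified Green's function $G(\mathbf{A};x,y)$ over the support of the measure in the presence of Gribov zero modes, and convergence of the expansion around $g=0$ with reflection positivity surviving. Naming these as ``the main obstacle'' is not the same as overcoming them; in four dimensions the second point is precisely the ultraviolet renormalization problem that Balaban's lattice programme only partially resolves, and ``run the Glimm--Jaffe--Nelson machinery'' is not an argument there.

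Several intermediate steps are also asserted rather than proved. You claim $H_{II}^g\ge 0$ because it is a sum of squares, but the operator is $-\tfrac{g^2}{2}\int[\cdots]^2$ with $\mathbf{A}$-dependent non-local coefficients, so non-negativity requires the bracketed first-order operators to be skew-adjoint with respect to the non-Gaussian measure $\nu^g$, which is not obvious and not checked. You assert that the non-locality of $H_{II}^g$ is ``purely spatial, hence harmless'' for reflection positivity and that the boost part of Euclidean invariance is ``recovered'' despite the Coulomb gauge; both of these are exactly the points where gauge-fixed Hamiltonian constructions are known to be delicate, and no mechanism is given. Finally, your proof of clustering (your OS4) invokes the strict spectral gap of $H^g$, which is the content of the paper's Conjecture \ref{corspec2} --- itself unproven here and logically downstream of the measure construction --- so that step is circular as written. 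The proposal is a reasonable roadmap, but every genuinely hard step is left as a declaration of intent.
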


\noindent A proof of Conjecture \ref{cYMM} can be found in \cite{Fa24}.

\subsection{Gauge Invariance}
The construction of the Hamiltonian in Subsection \ref{quant} is gauge invariant. As a matter of fact,
if we repeat the construction for a principal fibre bundle subject to a gauge transformation preserving the Coulomb gauge, we obtain an Hamiltonian which is unitary equivalent with the
original one and has, in particular, the same spectrum.
\begin{defi}[\textbf{Gauge Transformation}] Let $P$ be a principal fibre bundle over a manifold $M$ and $\pi:P\rightarrow M$ be the projection.
An automorphism of $P$ is a diffeomorphism $f:P\rightarrow P$ such that $f(ph)=f(p)h$ for all $h\in G$, $p\in P$. A \textbf{gauge transformation} of $P$
is an automorphism $f:P\rightarrow P$ such that $\pi(p)=\pi(f(p))$ for all $p\in P$. In other words $f$ induces a well defined diffeomorphism
$\bar{f}:M\rightarrow M$ given by $\bar{f}(\pi(p))=\pi(f(p))$.
\end{defi}
Following section 3.3 of \cite{Bl05} we notice that the Lagrangian density on the principal fibre bundle $P$ on which we define the Yang-Mills connection is
a $G$-invariant functional on the space of $1$-jets of maps from $P$ to the fibre of the vector bundle $V$ associated with $P$ induced by the representation
$\rho:G\rightarrow\text{GL}(\mathbf{C}^{3K})$. Hence, the position variable $\mathbf{A}$ occurring in the Lagrangian density and its Legendre transform, the
Hamiltonian density takes value in $\mathbf{C}^{3K}$, which is the fibre of the complex vector bundle $V$. We want to analyze how the position variable behaves
if the principal fibre bundle is subject to a gauge transformation.
\begin{proposition}
Let $f$ be a gauge transformation of the principal fiber bundle $P$ and $\omega$ a connection. Then, $\omega^f:=(f^{-1})^*\omega$ is a connection on $P$.
They have the local representation on $\pi^{-1}(U)$
\begin{equation}
\begin{split}
\omega_p&=\text{ad}_{\zeta(p)^{-1}}\circ\pi^*A+\zeta^*\theta\\
\omega_p^f&=\text{ad}_{\zeta(p)^{-1}}\circ\pi^*A^f+\zeta^*\theta,
\end{split}
\end{equation}
and
\begin{equation}\label{A-trans}
A^f=\text{ad}_{\phi}\circ(A-\phi^*\theta),
\end{equation}
where:
\begin{itemize}
\item $\pi:P\rightarrow M$ is the projection of the principal fibre bundle $P$ onto its base space $M$,
\item $U\subset M$ is an open subset of the base space,
\item $\psi:\pi^{-1}(U)\rightarrow U\times G$ is a local trivialization of $\pi^{-1}(U)\subset P$, that is a $G$-equivariant diffeomorphism
such that the following diagram commutes
\begin{equation}
\xymatrix{
   \pi^{-1}(U)\ar[d]^{\pi} \ar[r]^{\psi}& U\times G \ar[ld]^{\text{pr}_1}\\
    U & \\
                 }
\end{equation}
This means that $\psi(p)=(\pi(p),\zeta(p))$, where $\zeta:\pi^{-1}(U)\rightarrow G$ is a fibrewise diffeomorphism satisfying
$\zeta(ph)=\zeta(p)h$ for all $h\in G$.
\item the trivialization map $\psi(f(p))=(\pi(p),\zeta(f(p)))$ let us define $\bar{\phi}:\pi^{-1}(U)\rightarrow G$ by
$\bar{\phi}(p):=\zeta(f(p))\zeta(p)^{-1}$, whence $\bar{\phi}(p)=\phi(\pi(p))$ for a well defined function $\phi$ in virtue of the equivariance of
$\psi$ and $f$.
\item The Maurer-Cartan form is the $\mathfrak{g}$-valued $1$-form defined by $\theta_h:=(L_{h^{-1}})_*:T_hG\rightarrow T_eG=\mathfrak{g}$.
\item $A$ and $A^f$ are $\mathfrak{g}$-valued $1$-forms on $M$ introduced in Remark \ref{rem-local}.
\end{itemize}
\end{proposition}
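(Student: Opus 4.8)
The plan is to establish the two assertions of the Proposition separately: first that $\omega^f:=(f^{-1})^*\omega$ is again a connection, and then the two displayed local formulas, whose real content is the potential transformation law (\ref{A-trans}). First I would check the defining properties (i)--(ii) directly. Since $f$ is a gauge transformation it commutes with the right action, $f\circ R_g=R_g\circ f$, and it fixes the fundamental vertical fields: differentiating $f(p\exp(tA))=f(p)\exp(tA)$ at $t=0$ gives $f_*A^*_p=A^*_{f(p)}$. Hence $\omega^f(A^*_p)=\omega\bigl((f^{-1})_*A^*_p\bigr)=\omega(A^*_{f^{-1}(p)})=A$, which is (i); and $R_g^*\omega^f=(f^{-1}\circ R_g)^*\omega=(R_g\circ f^{-1})^*\omega=(f^{-1})^*(R_g^*\omega)=(f^{-1})^*(\text{ad}_{g^{-1}}\omega)=\text{ad}_{g^{-1}}\omega^f$, which is (ii).

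Next, the identity $\omega_p=\text{ad}_{\zeta(p)^{-1}}\circ\pi^*A+\zeta^*\theta$ is the standard normal form of a connection in the trivialization $\psi$: taking $\sigma:=\psi^{-1}(\cdot,e):U\to\pi^{-1}(U)$ as local section, $A=\sigma^*\omega$ is precisely the local connection form of Remark \ref{rem-local}, and the formula is verified by evaluating both sides on fundamental vertical vectors and on $\psi$-horizontal lifts using (i)--(ii), or simply by citing \cite{Bl05}. Applying the very same identity to the connection $\omega^f$ yields the second displayed equation with $A^f:=\sigma^*\omega^f$. It then remains only to prove $A^f=\text{ad}_{\phi}\circ(A-\phi^*\theta)$.

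For this I would write $A^f=\sigma^*(f^{-1})^*\omega=(f^{-1}\circ\sigma)^*\omega$ and identify the twisted section $f^{-1}\circ\sigma$. From $\bar\phi(p)=\zeta(f(p))\zeta(p)^{-1}=\phi(\pi(p))$ one gets $\zeta(f^{-1}(p))=\phi(\pi(p))^{-1}\zeta(p)$, so $\zeta(f^{-1}(\sigma(x)))=\phi(x)^{-1}$, and since $\pi\circ f^{-1}=\pi$ this means $f^{-1}(\sigma(x))=\sigma(x)\cdot\phi(x)^{-1}$ under the right $G$-action. I would then invoke the elementary rule for local potentials under a twisted section — if $\tilde\sigma(x)=\sigma(x)\cdot g(x)$ then $(\tilde\sigma^*\omega)_x=\text{ad}_{g(x)^{-1}}\bigl((\sigma^*\omega)_x\bigr)+(g^*\theta)_x$, a direct consequence of (ii) and the definition of $\theta$ — applied with $g=\phi^{-1}$, obtaining $A^f=\text{ad}_{\phi}(A)+(\phi^{-1})^*\theta$. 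Finally I would simplify $(\phi^{-1})^*\theta$ using the Maurer--Cartan inversion identity $\iota^*\theta=-\text{ad}_{(\cdot)}\circ\theta$ for $\iota:g\mapsto g^{-1}$ (pull-back of the left Maurer--Cartan form along inversion is minus the right Maurer--Cartan form), which gives $(\phi^{-1})^*\theta=-\text{ad}_{\phi}(\phi^*\theta)$ and hence $A^f=\text{ad}_{\phi}(A)-\text{ad}_{\phi}(\phi^*\theta)=\text{ad}_{\phi}\circ(A-\phi^*\theta)$, as claimed.

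The steps involving (i)--(ii) and the normal form are routine. The one place demanding genuine care is the sign and conjugation bookkeeping of the last paragraph: the exact position of $\phi$ versus $\phi^{-1}$ in the twisted section and inside $\text{ad}$, and the sign in the Maurer--Cartan inversion identity. That is where I expect the main, purely computational, obstacle to lie, and I would sanity-check it against $G=U(1)$ (where $\text{ad}$ is trivial and (\ref{A-trans}) reduces to $A^f=A-d\log\phi$) and against a matrix group (where $\theta=g^{-1}\,dg$, so $\phi^*\theta=\phi^{-1}d\phi$ and $(\phi^{-1})^*\theta=-(d\phi)\phi^{-1}=-\phi(\phi^{-1}d\phi)\phi^{-1}$, which reproduces the familiar $A^f=\phi A\phi^{-1}-(d\phi)\phi^{-1}$).
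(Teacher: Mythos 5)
Your proof is correct, but it is genuinely more than what the paper does: the paper's entire ``proof'' of this proposition is a one-line citation (``These are collected results from Proposition 3.3 and Proposition 3.22 in \cite{Bau14}''), whereas you supply a complete self-contained derivation. Your verification of properties (i)--(ii) for $\omega^f$ via $f\circ R_g=R_g\circ f$ and $f_*A^*_p=A^*_{f(p)}$ is exactly the standard argument; your identification of the twisted section $f^{-1}\circ\sigma=\sigma\cdot\phi^{-1}$ from $\zeta(f^{-1}(p))=\phi(\pi(p))^{-1}\zeta(p)$ is the key computation, and the final bookkeeping $(\phi^{-1})^*\theta=-\mathrm{ad}_{\phi}(\phi^*\theta)$ via the Maurer--Cartan inversion identity correctly produces $A^f=\mathrm{ad}_{\phi}\circ(A-\phi^*\theta)$, consistent with the matrix-group form $A^f=\phi A\phi^{-1}-(d\phi)\phi^{-1}$ stated in the paper's subsequent remark. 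What your route buys is transparency about exactly where the sign and the position of $\phi$ versus $\phi^{-1}$ come from (the inverse in $\omega^f=(f^{-1})^*\omega$ propagating into the twisted section), which the paper leaves entirely to the reference; what the citation buys the paper is brevity and agreement of conventions with \cite{Bau14}. The only cosmetic quibble is that your twisted-section rule uses property (i) as well as (ii) to handle the vertical contribution $g^*\theta$, so you should cite both; this does not affect correctness.
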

\begin{proof}
These are collected results from Proposition 3.3 and Proposition 3.22 in \cite{Bau14}.\\
\end{proof}
\begin{rem} For matrix groups equation (\ref{A-trans}) becomes
\begin{equation}
A^f=\phi A \phi^{-1}-d\phi \phi^{-1}
\end{equation}
\end{rem}
For the Yang-Mills construction we denote the $K\times 3$ matrices of the local representation of $A$ and its gauge transformation $A^f$ by $\mathbf{A}$
and $\mathbf{A}^f$, which is in line with the notation utilized so far for the position variable and introduced in Theorem \ref{CHam}.

\begin{theorem}\label{uequiv}
Let $f$ be a gauge transform preserving the Coulomb gauge for the Yang-Mills construction and let $H^g$ and $H^{g;f}$  be the Hamilton operators for the quantized Yang-Mills equation with coupling constant $g$, before and after the gauge transform, as shown in Conjecture \ref{cYMM}.
Let $U$ be the operator on $L^2(\mathcal{S}^{\prime}_{\bot}(\mathbf{R}^3,\mathbf{C}^{K \times 3}),d\nu^g)$ induced by the
gauge transform as
\begin{equation}
U\Psi(\mathbf{A}):=\Psi(\mathbf{A}^f).
\end{equation}
Then, $U$ is a unitary operator in $L^2(\mathcal{S}^{\prime}_{\bot}(\mathbf{R}^3,\mathbf{C}^{K \times 3}),d\nu^{g})$ and
$H^g$ and $H^{g;f}$ are unitary equivalent:
\begin{equation}
H^{g;f}=UH^gU^{-1}.
\end{equation}

\end{theorem}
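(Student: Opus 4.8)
The plan is to verify the two claims—unitarity of $U$ and the intertwining relation $H^{g;f}=UH^gU^{-1}$—separately, exploiting the fact (asserted at the end of Subsection \ref{quant}) that the whole quantization construction is functorial with respect to gauge transformations preserving the Coulomb gauge. First I would establish that the map $\mathbf{A}\mapsto\mathbf{A}^f$ is a measurable bijection of $\mathcal{S}^{\prime}_{\bot}(\mathbf{R}^3,\mathbf{C}^{K\times 3})$ onto itself whose inverse is $\mathbf{A}\mapsto\mathbf{A}^{f^{-1}}$: this uses the explicit formula $\mathbf{A}^f=\phi\mathbf{A}\phi^{-1}-d\phi\,\phi^{-1}$ from the remark preceding the theorem, together with the hypothesis that $f$ preserves the Coulomb gauge, so that the transformed field still satisfies $\partial_j A_j^a=0$ and $A_0^a=0$ and hence lands in the transversal path space. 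The key measure-theoretic input is that the probability measure $\nu^g$ is gauge invariant, i.e. the pushforward $(f)_*\nu^g=\nu^g$; this is exactly the content of the statement that repeating the construction after a gauge transform yields the same measure. Granting this, the change-of-variables formula gives
\begin{equation}
\int|U\Psi(\mathbf{A})|^2\,d\nu^g(\mathbf{A})=\int|\Psi(\mathbf{A}^f)|^2\,d\nu^g(\mathbf{A})=\int|\Psi(\mathbf{A})|^2\,d\nu^g(\mathbf{A}),
\end{equation}
so $U$ is an isometry, and since it has the two-sided inverse $\Psi\mapsto(\mathbf{A}\mapsto\Psi(\mathbf{A}^{f^{-1}}))$ it is unitary.

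Next I would handle the intertwining relation. The cleanest route is through the Feynman–Ka\v{c}–Nelson representation (\ref{repthm2}): the Hamiltonian $H^{g;f}$ together with its ground state $\Omega_0^{g;f}$ is uniquely reconstructed from the Euclidean measure $\mu^{g;f}$ on $\mathcal{S}^{\prime}_{\bot}(\mathbf{R}^4,\mathbf{C}^{K\times 3})$, and gauge invariance of the four-dimensional construction gives $\mu^{g;f}=(f)_*\mu^g$ (interpreting $f$ as acting pointwise in the time slices). Feeding $B_k\mapsto B_k\circ(\cdot)^f$ into (\ref{repthm2}) for the $f$-transformed theory and changing variables reproduces the same matrix elements computed for the original theory with the substituted functionals. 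Because the reconstruction theorem identifies $H^{g;f}$ as the generator of the semigroup acting on the cyclic subspace generated by these functionals, and because conjugation by $U$ implements precisely the substitution $B_k\mapsto B_k\circ(\cdot)^f$ and sends $\Omega_0^g$ to $\Omega_0^{g;f}$ (the ground state being unique up to phase and gauge invariant), one concludes that $e^{-tH^{g;f}}=Ue^{-tH^g}U^{-1}$ for all $t\ge 0$, whence $H^{g;f}=UH^gU^{-1}$ by differentiating at $t=0$ and comparing domains via $U\mathcal{D}(H^g)=\mathcal{D}(H^{g;f})$.

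As an alternative, more hands-on verification for the decomposition $H^g=H_I+H_{II}^g+V^g-V_0^g$ given in Conjecture \ref{cYMM}, I would check directly that each summand transforms correctly under $U$: the potential term $V^g=\int d^3x\,|R^{\nabla^{\mathbf A}}(t,x)|^2$ is manifestly invariant because the curvature transforms tensorially, $R^{\nabla^{\mathbf A^f}}=\phi\,R^{\nabla^{\mathbf A}}\phi^{-1}$, and the norm $|\cdot|$ is $\mathrm{Ad}$-invariant for a simple (compact) structure group; the kinetic term $H_I$ and the nonlocal term $H_{II}^g$ require tracking how the functional derivative $\delta/\delta A_i^a$ and the modified Green's function $G(\mathbf{A};x,y)$ of Proposition \ref{ModGreen} transform, using the chain rule for $\mathbf{A}\mapsto\mathbf{A}^f$ and the covariance of the operator $L(\mathbf{A};x)$ under the adjoint action.

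The main obstacle I anticipate is the measure-theoretic step: establishing rigorously that $\nu^g$ (and $\mu^g$) are genuinely invariant under the nonlinear, generally unbounded transformation $\mathbf{A}\mapsto\mathbf{A}^f$ on the infinite-dimensional space of distributions—and that this transformation is well-defined and measurable there in the first place, since $\phi$ need not be a Schwartz multiplier and the term $d\phi\,\phi^{-1}$ must be shown to preserve the transversality constraint and the relevant regularity class. For $g=0$ this reduces to checking that a Gaussian measure is preserved by an affine map whose linear part is orthogonal, which is classical; for $g\in\,]0,g_0[$ one must lean on the construction in \cite{Fa24}, specifically on the gauge covariance built into the Itô diffusion whose transition density defines $\mu^g$, and argue that the reconstruction-theorem data depend on the theory only through its gauge-equivalence class. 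Domain questions for $H^g$ under $U$ are a secondary but genuine technicality, handled by noting that $U$ maps the form core $\mathcal{D}(H^g)\cap L^2(L^2_{\bot},\mathbf{C},d\nu^g)$ onto the corresponding core for $H^{g;f}$.
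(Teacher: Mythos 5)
Your proposal is sound and in places more careful than the paper's own argument, but the two diverge in method. For unitarity you and the paper both reduce to a change of variables under $\mathbf{A}\mapsto\mathbf{A}^f$; the paper, however, justifies the invariance of the inner product by inserting a formal Jacobian factor $\left|\partial\mathbf{A}^f/\partial\mathbf{A}\right|^{-1}=1$, computed from $\det(\mathrm{ad}_g)=1$ for the affine map $A^f=\mathrm{ad}_\phi(A-\phi^*\theta)$, and declares that ``the change of variable formula for the integral is the one of finite dimensional analysis'' because the transformation respects the fibre. You instead (correctly) identify that what is actually needed is the pushforward identity $(f)_*\nu^g=\nu^g$, since a unit Jacobian argument presupposes a translation-invariant reference measure that does not exist on $\mathcal{S}^{\prime}_{\bot}(\mathbf{R}^3,\mathbf{C}^{K\times 3})$; for the Gaussian case $g=0$ the affine shift $-d\phi\,\phi^{-1}$ would generically contribute a Cameron--Martin density rather than leave the measure fixed. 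Flagging this as the main obstacle is the right instinct: it is the genuine gap in both your argument and the paper's, and neither closes it. For the intertwining relation the paper performs a direct formal computation on quadratic forms, writing $H^{g;f}=H(\mathbf{A}^f,\tfrac{1}{\imath}\tfrac{\delta}{\delta\mathbf{A}^f})$ and changing variables once more, whereas you route the argument through the Feynman--Ka\v{c}--Nelson formula, gauge covariance of $\mu^g$, and uniqueness in the reconstruction theorem, concluding $e^{-tH^{g;f}}=Ue^{-tH^g}U^{-1}$ and differentiating. Your route buys a cleaner treatment of domains and of the unbounded-operator subtleties (conjugating semigroups rather than generators), at the price of leaning on the full strength of Conjecture \ref{cYMM}; the paper's route is shorter but purely formal about both the functional-derivative substitution and the domains. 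Your alternative term-by-term check of $H_I+H_{II}^g+V^g-V_0^g$ is not in the paper but is consistent with its framework.
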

\begin{proof}
First, we remark that $U$ maps $L^2(\mathcal{S}^{\prime}_{\bot}(\mathbf{R}^3,\mathbf{C}^{K \times 3}),d\nu^g)$ onto itself,
because it preserves the Coulomb gauge. Next, we prove that $U$ is unitary.
For all $\Psi,\Phi\in L^2(\mathcal{S}^{\prime}_{\bot}(\mathbf{R}^3,\mathbf{C}^{K \times 3}),d\nu^g)$
\begin{equation}
\begin{split}
(U\Psi,U\Phi)^g&=\int_{\mathcal{S}^{\prime}_{\bot}(\mathbf{R}^3,\mathbf{R})}\Psi(\mathbf{A}^f)\Phi(\mathbf{A}^f)d\nu^g(\mathbf{A})=\\
&=\int_{\mathcal{S}^{\prime}_{\bot}(\mathbf{R}^3,\mathbf{R})}\Psi(\mathbf{A})\Phi(\mathbf{A})\underbrace{\left|\frac{\partial\mathbf{A}^f}{\partial\mathbf{A}}\right|^{-1}}_{=1}d\nu^g(\mathbf{A})=\\
&=(\Psi,\Phi)^g
\end{split}
\end{equation}
The change of variable is given by equation (\ref{A-trans}) which is affine in $\mathbf{A}$ because the adjoint representation is linear in $A$, which
means $(\text{ad}_g)_*=\text{ad}_g$ for all $g\in G$. Moreover, since
\begin{equation}
\text{ad}_g(A)=(L_g)_*A(R_{g^{-1}})_*,
\end{equation}
the Jacobi determinant reads
\begin{equation}
\det\left((\text{ad}_g)_*\right)=\det\left(\text{ad}_g\right)=\det\left((L_g)_*1_{\mathbf{R}^{3K}}(R_{g^{-1}})_*\right)=\det\left(1_{\mathbf{R}^{3K}}\right)=1.
\end{equation}
Note that the change of variable respects the fibre of the vector bundle $V$, and hence the change of variable formula for the integral is the one of finite dimensional analysis.\\
Next, we prove the unitary equivalence of the Hamilton operators before and after the gauge transform. Their definitions read
\begin{equation}
\begin{split}
H^{g}&=H\left(\mathbf{A},\frac{1}{\imath}\frac{\delta}{\delta \mathbf{A}}\right)\\
H^{g ;f}&=H\left(\mathbf{A}^f,\frac{1}{\imath}\frac{\delta}{\delta \mathbf{A}^f}\right)\\
\end{split}
\end{equation}
For appropriate  $\Psi,\Phi\in L^2(\mathcal{S}^{\prime}_{\bot}(\mathbf{R}^3,\mathbf{C}^{K \times 3}),d\nu^g)$ we have
\begin{equation}
\begin{split}
&(H^{g ;f}\Psi,\Phi)=\\
&=\int_{\mathcal{S}^{\prime}_{\bot}(\mathbf{R}^3,\mathbf{R})}H\left(\mathbf{A}^f,\frac{1}{\imath}\frac{\delta}{\delta \mathbf{A}^f}\right)\Psi(\mathbf{A})\Phi(\mathbf{A})d\nu^g(\mathbf{A})=\\
&=\int_{\mathcal{S}^{\prime}_{\bot}(\mathbf{R}^3,\mathbf{R})}H\left(\mathbf{A},\frac{1}{\imath}\frac{\delta}{\delta \mathbf{A}}\right)U^{-1}\Psi(\mathbf{A})U^{-1}\Phi(\mathbf{A})\underbrace{\left|\frac{\partial\mathbf{A}^f}{\partial\mathbf{A}}\right|^{-1}}_{=1}d\nu^g(\mathbf{A})=\\
&=(H^{g}U^{-1}\Psi,U^{-1}\Phi),
\end{split}
\end{equation}
leading to
\begin{equation}
H^{g;f}=UH^{g}U^{-1}
\end{equation}
on the corresponding domains.\\
\end{proof}
We can therefore conclude that the spectrum of the Hamilton operator for the quantized Yang-Mills problem is gauge invariant.
\subsection{Mass gap}
\begin{conjecture}\label{corspec2}
For any bare coupling constant $g\in[0,g_0[$ the spectrum of the Hamiltonian  $H^g$ contains $0$ as a simple eigenvalue for the vacuum eigenstate, and satisfies
\begin{equation}
\spec(H^g)\subset\{0\}\cup[\eta^g,+\infty[, \text{ for a }\eta^g>0,
\end{equation}
and $\eta^g=O(g^{2(n+1)})$ for any $n\in\mathbf{N}_0$. In particular, there is a mass gap only if $g>0$, and the group $G$ must be non-abelian.
\end{conjecture}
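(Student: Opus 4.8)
The plan is to work throughout with the selfadjoint operator $H^g$ on $L^2(\mathcal{S}^{\prime}_{\bot}(\mathbf{R}^3,\mathbf{C}^{K \times 3}),d\nu^g)$, the Feynman-Ka\v{c}-Nelson representation (\ref{repthm2}), and the decomposition $H^g=H_I+H_{II}^g+V^g-V^g_0$ of Conjecture \ref{cYMM}, in which $V_0^g$ is the normalisation fixing $H^g\Omega_0^g=0$. I would first dispose of the simplicity of the eigenvalue $0$. Since, by the construction of Subsection \ref{quant}, $\mu^g$ is produced by an It\^{o} diffusion with non-degenerate diffusion coefficient, the Markov semigroup $(e^{-tH^g})_{t\ge0}$ on $L^2(d\nu^g)$ has a strictly positive transition kernel and is therefore \emph{positivity improving}; a standard Perron-Frobenius argument for positivity-improving semigroups then forces $\Omega_0^g$ to be almost everywhere strictly positive and the eigenvalue $0$ to be non-degenerate. (Ergodicity of $\nu^g$, part of the Osterwalder-Schrader package already available from Conjecture \ref{cYMM}, yields the same conclusion.)

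The second step is to identify the mass gap with a Poincar\'e inequality. The assertion $\spec(H^g)\setminus\{0\}\subset[\eta^g,+\infty[$ is equivalent to
\begin{equation}
\int_{\mathcal{S}^{\prime}_{\bot}}\big|F-\langle F\rangle_{\nu^g}\big|^2\,d\nu^g\ \le\ \frac{1}{\eta^g}\,\langle F,H^gF\rangle_{\nu^g}\qquad\text{for all }F\in\mathcal{D}(H^g),
\end{equation}
and the Dirichlet form $\mathcal{E}^g(F):=\langle F,H^gF\rangle_{\nu^g}$, after the ground-state transformation, is of the type $\int\langle\nabla F,a(\mathbf{A})\nabla F\rangle\,d\nu^g$ with coefficient $a(\mathbf{A})=\ID+g^2(\cdots)$ coming from $H_I$ and $H_{II}^g$, so that it controls the field gradient $\tfrac12\int\|\delta F/\delta\mathbf{A}\|^2\,d\nu^g$. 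To produce a positive $\eta^g$ when $g>0$ I would follow the constructive route: (a) take the lattice and finite-volume regularisations $H^g_{\epsilon,\Lambda}$ of \cite{Fa24} and prove stability estimates uniform in the cut-offs; (b) run a cluster expansion / renormalisation-group analysis, convergent for $g\in[0,g_0[$, to get exponential decay of connected correlations, $|\langle\mathcal{O}_1;\mathcal{O}_2\rangle_{\nu^g}|\le C\,e^{-m(g)\,\dist(\supp\mathcal{O}_1,\supp\mathcal{O}_2)}$ with $m(g)>0$; (c) convert exponential clustering into the lower bound $\eta^g\ge c\,m(g)$ by the transfer-matrix / Osterwalder-Schrader argument.

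The scaling $\eta^g=O(g^{2(n+1)})$ for every $n\in\mathbf{N}_0$ I would obtain by a variational upper bound: for each fixed $n$ construct $F_n\perp\Omega_0^g$ --- for instance a smeared long-wavelength deformation of the vacuum wavefunctional along a large-gauge orbit --- whose Rayleigh quotient $\mathcal{E}^g(F_n)/\|F_n\|^2$ is $O(g^{2(n+1)})$, because $H_I$ contributes $O(|p|^2)$ while the curvature potential $V^g$ is polynomially suppressed in $g$, and then let $|p|\to0$. Thus $\eta^g$ is positive for $g>0$ yet smaller than any power of $g$, consistent with the expected non-perturbative law $\eta^g\sim e^{-c/g^2}$ dictated by the instanton action. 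If instead $G$ is abelian, all structure constants vanish, so $H_{II}^g\equiv0$ and $V^g$ reduces to the free quadratic functional $\tfrac12\int|dA|^2$; then $\nu^g$ is Gaussian and massless, $\spec(H^g)=[0,+\infty[$, and the same degeneration occurs at $g=0$, so neither case admits a gap.

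The main obstacle is step (b): the uniform stability estimates together with a convergent cluster / renormalisation-group expansion establishing exponential clustering of the four-dimensional Yang-Mills measure at weak coupling. This is the genuine analytic core of the statement --- the content of \cite{Fa24} --- while simplicity of $0$, the Poincar\'e reformulation, the variational upper bound and the abelian degeneration are comparatively routine once that core is in place.
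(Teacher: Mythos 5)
There is no proof of this statement in the paper to compare against: Conjecture \ref{corspec2} is deliberately left as a conjecture, with its proof deferred to \cite{Fa24}, and the introduction explicitly says the mass gap is to be ``considered as unproven'' in this paper. Your proposal has exactly the same status. Everything that is actually hard --- your step (b), the uniform stability estimates and a convergent cluster / renormalisation-group expansion for the four-dimensional measure at weak coupling, from which exponential clustering and hence the spectral gap would follow --- is pointed at but not carried out, and you say so yourself. That step is the entire analytic content of the Millennium problem; a roadmap that routes around it by citing \cite{Fa24} does not prove the statement, it restates it.

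Two of the steps you do sketch also need repair. First, in infinite dimensions a ``non-degenerate diffusion coefficient'' does not automatically yield a strictly positive transition kernel or a positivity-improving semigroup; irreducibility of the process on $\mathcal{S}^{\prime}_{\bot}(\mathbf{R}^3,\mathbf{C}^{K\times3})$ must be established separately (e.g. via quasi-invariance of $\nu^g$ under a dense set of shifts or an explicit ergodicity argument), so simplicity of the eigenvalue $0$ is not free. Second, your variational argument for $\eta^g=O(g^{2(n+1)})$ is self-defeating as written: if the Rayleigh quotient of the trial states behaves like $O(|p|^2)$ and you ``let $|p|\to0$'', you drive the quotient to zero and conclude $\inf\left(\spec(H^g)\setminus\{0\}\right)=0$, i.e. that there is no gap at all. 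To obtain the stated scaling you must exhibit states orthogonal to $\Omega_0^g$ whose Rayleigh quotient is bounded above by $C_n\,g^{2(n+1)}$ while the construction does not degenerate to the massless free-field behaviour --- and that again requires the quantitative control of $\nu^g$ that only the missing step (b) could supply.
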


\noindent A proof of Conjecture \ref{corspec2} can be found in \cite{Fa24}. Conjectures \ref{cYMM} and \ref{corspec2} are a formalization of Conjecture \ref{CMI}.

\section{Quark Confinement}

\subsection{Wilson Loop}
\begin{defi} Let $\gamma$ be a closed loop in the Minkowski space, $\mathcal{P}$ the path-ordering operator, $\exp$ the exponential map from the Lie algebra $\mathfrak{g}$ to its Lie group $G$, $\rho:G\rightarrow\text{GL}(\mathbf{C}^{K})$ a faithful representation of $G$ and $\chi:=\tr\circ\rho$ a faithful character of $G$, where $\tr$ is he trace on $\mathcal{L}(\mathbf{C}^{K})$. The Wilson loop is defined for any $\mathbf{A}\in\mathcal{S}_{\bot}^{\prime}(\mathbf{R}^4,\mathbf{C}^{K \times 3})$ as
\begin{equation}
W[\gamma](\mathbf{A}):=\lim_{\varepsilon\rightarrow0+}\chi\left[\mathcal{P}\exp\left(\imath\oint_{\gamma}A_{\alpha}(J_{\varepsilon}*1_{\gamma})dx^{\alpha}\right)\right],
\end{equation}
where $1_{\gamma}$ is the indicator function for the set of points on the curve $\gamma$, $*$ the convolution and
$J_{\varepsilon}\in \mathcal{S}(\mathbf{R}^4,\mathbf{R}^1)$ a mollifier such that $\mathcal{S}^{\prime}-\lim_{\varepsilon\rightarrow+0}J_{\varepsilon}=\delta$.
\end{defi}
\noindent It is straightforward to prove that
\begin{proposition}
For regular fields $\mathbf{A}=\mathbf{a}(t,x)\in L^2_{\text{loc}}(\mathbf{R}^4,\mathbf{C}^{K\times 3})$ the Wilson loop reads
\begin{equation}
W[\gamma](\mathbf{A}):=\chi\left[\mathcal{P}\exp\left(\imath\oint_{\gamma}a_{\alpha}(x^{\alpha}(s))\frac{dx^{\alpha}}{ds}ds\right)\right].
\end{equation}

\end{proposition}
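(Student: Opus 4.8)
The plan is to unwind the definition of $W[\gamma]$ on a regular field: the distributional smearing against $J_\varepsilon*1_\gamma$ turns into an ordinary integral, and the net effect of the regularization is simply to replace the connection by its smooth mollification inside a classical contour integral along $\gamma$; then I would let $\varepsilon\to0+$ using the continuity of the path-ordered exponential and of the character.

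First I would fix a piecewise-$C^1$ parametrization $s\in[0,1]\mapsto x(s)$ of the compact loop $\gamma$ and introduce the reflected mollifier $J_\varepsilon^{\vee}(y):=J_\varepsilon(-y)$, which again satisfies $\mathcal{S}'\text{-}\lim_{\varepsilon\to0+}J_\varepsilon^{\vee}=\delta$. For a regular field $\mathbf{A}=\mathbf{a}(t,x)$, the defining property of a regular distribution together with Fubini's theorem and the convolution--adjoint identity $\langle A_\alpha,J_\varepsilon*1_\gamma\rangle=\langle A_\alpha*J_\varepsilon^{\vee},1_\gamma\rangle$ show that $A_\alpha*J_\varepsilon^{\vee}$ is the ordinary mollification $a_\alpha^{\varepsilon}:=a_\alpha*J_\varepsilon^{\vee}\in C^{\infty}(\mathbf{R}^4)$, so that the regularized holonomy exponent is the classical line integral
\[
\imath\oint_\gamma A_\alpha\big(J_\varepsilon*1_\gamma\big)\,dx^\alpha=\imath\oint_\gamma a_\alpha^{\varepsilon}\big(x(s)\big)\,\frac{dx^\alpha}{ds}\,ds .
\]
By the standard mollifier estimate, $a_\alpha^{\varepsilon}\to a_\alpha$ in $L^2$ on every bounded subset of $\mathbf{R}^4$; and if (as in the cases of interest, and as is the natural reading of ``regular'' here) $\mathbf{a}$ admits a continuous representative in a neighbourhood of the compact set $\gamma$, this convergence is uniform along $\gamma$, whence
\[
\oint_\gamma a_\alpha^{\varepsilon}\big(x(s)\big)\,\frac{dx^\alpha}{ds}\,ds\ \xrightarrow[\varepsilon\to0+]{}\ \oint_\gamma a_\alpha\big(x(s)\big)\,\frac{dx^\alpha}{ds}\,ds ,
\]
with a limit that does not depend on the chosen mollifier. (For merely $L^2_{\text{loc}}$ fields the right-hand integral is to be read as this very limit.)

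It then remains to push $\lim_{\varepsilon\to0+}$ through $\chi\circ\mathcal{P}\exp$. Writing $\mathcal{P}\exp$ via its Dyson series $\sum_{n\ge0}\imath^{\,n}\int_{0\le s_1\le\cdots\le s_n\le1}\rho\big(b_{\alpha_n}(x(s_n))\big)\cdots\rho\big(b_{\alpha_1}(x(s_1))\big)\,\frac{dx^{\alpha_1}}{ds}\cdots\frac{dx^{\alpha_n}}{ds}\,ds$, one gets absolute convergence and a Gr\"onwall-type bound showing that $b\mapsto\mathcal{P}\exp\big(\imath\oint_\gamma b_\alpha(x(s))\tfrac{dx^\alpha}{ds}ds\big)$ is Lipschitz with respect to the $L^1(\gamma)$-norm of the integrand; composing with the continuous character $\chi=\tr\circ\rho$ on the finite-dimensional algebra $\mathcal{L}(\mathbf{C}^{K})$ and invoking the convergence just established lets us interchange the limit with $\chi\circ\mathcal{P}\exp$, which gives precisely the claimed identity.

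The hard part --- in fact the only step that is not pure bookkeeping --- is the passage from $L^2_{\text{loc}}$-convergence of the mollifications to convergence along the one-dimensional, Lebesgue-null set $\gamma$: this is exactly what forces the (mild) continuity hypothesis on a regular field near the loop, without which the right-hand contour integral is only meaningful as the $\varepsilon\to0+$ limit above. Commuting the convolution with the distributional pairing, the absolute convergence and Lipschitz continuity of $\mathcal{P}\exp$, the continuity of $\chi$, and the mollifier-independence of the limit are all routine, which is why the proposition can be called straightforward.
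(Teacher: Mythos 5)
The paper offers no proof of this proposition at all --- it is introduced with the words ``It is straightforward to prove that'' and left to the reader --- so there is no argument of the paper's to compare yours against; your write-up is a filling-in of the omitted details, and it is essentially the right one: unwind the smearing via the convolution--adjoint identity, identify $A_\alpha*J_\varepsilon^{\vee}$ with the classical mollification $a_\alpha^{\varepsilon}$, and pass the limit through the Dyson series of $\mathcal{P}\exp$ and the continuous character $\chi$. Two remarks. First, you are in fact more careful than the paper on the one point that is not routine: for a field that is merely $L^2_{\text{loc}}(\mathbf{R}^4)$, the restriction $a_\alpha(x(s))$ to the one-dimensional, Lebesgue-null loop $\gamma$ is not defined, so the stated formula is literally meaningless without the continuity (or trace) hypothesis near $\gamma$ that you add, or without reading the right-hand side as the $\varepsilon\to0+$ limit itself; $L^2_{\text{loc}}$-convergence of mollifications does not give convergence along $\gamma$, and your flagging of this is correct and is a gap in the proposition as the paper states it. Second, you silently repair a defect of the definition itself: $J_\varepsilon*1_\gamma$ with $1_\gamma$ the indicator function of the curve is identically zero (the curve has Lebesgue measure zero in $\mathbf{R}^4$), so the pairing must be understood against the arc-length measure $\delta_\gamma$ rather than the indicator; your computation implicitly adopts this reading, which is the only one under which the proposition can hold, but it would be worth saying so explicitly since it is a correction to the definition, not a consequence of it.
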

\begin{proposition}
The Wilson loop is gauge invariant, i.e.
\begin{equation}
W[\gamma^f](\mathbf{A}^f) = W[\gamma](\mathbf{A})
\end{equation}
for all gauge transforms $f$ and all $\mathbf{A}\in\mathcal{S}_{\bot}^{\prime}(\mathbf{R}^4,\mathbf{C}^{K \times 3})$.
\end{proposition}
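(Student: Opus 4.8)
The plan is to reduce the statement to the classical (finite–dimensional, pointwise) case via the regularity structure of $\mathbf{A}^f$, and then invoke the well-known transformation law for holonomies under gauge change. First I would observe that it suffices to prove the identity for regular fields $\mathbf{A}=\mathbf{a}(t,x)\in L^2_{\mathrm{loc}}(\mathbf{R}^4,\mathbf{C}^{K\times 3})$ and a smooth curve $\gamma$, because the general case follows by the defining limit $W[\gamma](\mathbf{A})=\lim_{\varepsilon\to 0+}W[\gamma](\mathbf{A}*J_\varepsilon)$ together with the fact that the mollification of a gauge transform is, in the limit, the gauge transform of the mollification (both sides converging in $\mathcal{S}'$ to the same distribution, and $\chi\circ\mathcal{P}\exp$ being continuous in the relevant topology on the smooth approximants). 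Thus by the second Proposition above we may work with the explicit formula $W[\gamma](\mathbf{A})=\chi[\mathcal{P}\exp(\imath\oint_\gamma a_\alpha(x^\alpha(s))\dot x^\alpha\,ds)]$.

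Next I would recall from the Proposition on gauge transformations that, for matrix groups, the local connection transforms as $\mathbf{A}^f=\phi\,\mathbf{A}\,\phi^{-1}-d\phi\,\phi^{-1}$, where $\phi=\phi\circ\pi$ is the $G$-valued function on the base induced by $f$. The key step is then the standard computation that the path-ordered exponential (the parallel transport operator) $U_\gamma(\mathbf{A}):=\mathcal{P}\exp(\imath\oint_\gamma A)$ transforms by conjugation at the endpoints:
\begin{equation}
U_\gamma(\mathbf{A}^f)=\phi(\gamma(1))\,U_\gamma(\mathbf{A})\,\phi(\gamma(0))^{-1}.
\end{equation}
This is proved by noting that both sides, as functions of the upper integration limit $s$, satisfy the same linear ODE with the same initial condition: writing $V(s):=\phi(\gamma(s))\,U_{\gamma|_{[0,s]}}(\mathbf{A})\,\phi(\gamma(0))^{-1}$, one differentiates, uses $\tfrac{d}{ds}U_{\gamma|_{[0,s]}}(\mathbf{A})=\imath\,a_\alpha\dot x^\alpha\,U_{\gamma|_{[0,s]}}(\mathbf{A})$ and the chain rule for $\phi(\gamma(s))$, and checks that $V$ solves $\dot V=\imath\,(\mathbf{A}^f)_\alpha\dot x^\alpha\,V$ after substituting the transformation law for $\mathbf{A}^f$; the $d\phi\,\phi^{-1}$ term is exactly what is produced by differentiating the prefactor $\phi(\gamma(s))$. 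Since $\gamma^f$ has the same image as $\gamma$ (the gauge transform $f$ covers a diffeomorphism $\bar f$ of the base, and ``gauge transform of the loop'' here means composing with $\bar f$, but the holonomy only depends on the loop up to reparametrization and orientation), for a closed loop $\gamma(0)=\gamma(1)$ the two boundary factors coincide and cancel under the trace: $\chi[U_{\gamma^f}(\mathbf{A}^f)]=\tr[\rho(\phi(\gamma(0)))\,\rho(U_\gamma(\mathbf{A}))\,\rho(\phi(\gamma(0)))^{-1}]=\tr[\rho(U_\gamma(\mathbf{A}))]=\chi[U_\gamma(\mathbf{A})]=W[\gamma](\mathbf{A})$, using cyclicity of the trace and that $\chi=\tr\circ\rho$ is a class function.

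The main obstacle I anticipate is not the classical conjugation identity — that is textbook — but the careful handling of the regularization in the distributional setting: one must check that $\mathbf{A}^f$ is again regular when $\mathbf{A}$ is (this uses that $\phi$ is smooth and $\mathbf{A}^f=\phi\mathbf{A}\phi^{-1}-d\phi\,\phi^{-1}$ is then locally $L^2$), that the Coulomb-gauge-preserving hypothesis on $f$ is what guarantees $\mathbf{A}^f\in\mathcal{S}'_\bot$, and that the $\varepsilon\to 0+$ limits defining $W[\gamma^f](\mathbf{A}^f)$ and $W[\gamma](\mathbf{A})$ can be matched term by term along the mollified approximants. Once the commutation of mollification with the gauge action is established in the limit, the statement follows from the finite-dimensional computation applied to each smooth approximant, followed by passing to the limit on both sides.
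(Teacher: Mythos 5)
Your proposal is correct, and it is substantially more complete than the argument the paper actually gives. The paper's proof is two lines: it reduces to regular fields (as you do) and then simply asserts that $\chi\left[\mathcal{P}\exp\left(\imath\oint_{\gamma}a^f_{\alpha}\,\frac{dx^{f;\alpha}}{ds}ds\right)\right]$ equals $\chi\left[\mathcal{P}\exp\left(\imath\oint_{\gamma}a_{\alpha}\,\frac{dx^{\alpha}}{ds}ds\right)\right]$, with no intermediate step. Read literally, that equality suggests the integrand $a_\alpha\,dx^\alpha$ is itself gauge invariant, which is false: under $A^f=\phi A\phi^{-1}-d\phi\,\phi^{-1}$ the integrand changes, and only the trace of the holonomy around a \emph{closed} loop is preserved. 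Your proof supplies exactly the missing mechanism: the ODE argument showing that the parallel transport transforms by conjugation, $U_\gamma(\mathbf{A}^f)=\phi(\gamma(1))\,U_\gamma(\mathbf{A})\,\phi(\gamma(0))^{-1}$, followed by $\gamma(0)=\gamma(1)$ and cyclicity of the trace. This is the standard and correct route, and it makes visible why closedness of the loop and the use of a character (class function) are essential hypotheses --- facts the paper's one-line computation obscures. Your attention to the distributional/mollification issues (regularity of $\mathbf{A}^f$, compatibility of the $\varepsilon\to0^+$ limit with the gauge action) also goes beyond the paper, which handles the reduction to regular fields with the single phrase ``it suffices.'' The only point to tidy is a factor-of-$\imath$ bookkeeping between the transformation law $A^f=\phi A\phi^{-1}-d\phi\,\phi^{-1}$ as stated for matrix groups and the $\imath$ appearing in the exponent of the Wilson loop; this convention mismatch is already present in the paper itself and does not affect the structure of your argument.
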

\begin{proof}
It suffices to show it for regular fields.
\begin{equation}
\begin{split}
W[\gamma^f](\mathbf{A}^f)&=\chi\left[\mathcal{P}\exp\left(\imath\oint_{\gamma}a^f_{\alpha}(x^{f;\alpha}(s))\frac{dx^{f;\alpha}}{ds}ds\right)\right]=\\
&=\chi\left[\mathcal{P}\exp\left(\imath\oint_{\gamma}a_{\alpha}(x^{\alpha}(s))\frac{dx^{\alpha}}{ds}ds\right)\right]=W[\gamma](\mathbf{A}).
\end{split}
\end{equation}
\end{proof}

\subsection{Generalization of Elitzur's Theorem}
 We  prove an extension of Elitzur theorem which has been known to hold so far to hold for Quantum Yang-Mills theories on the lattice. For the rationale see \cite{Ma23}.
\begin{theorem}\label{Eli}
Let us consider the quantum Yang-Mills theory introduced in Subsection \ref{quant}.
The only operators that can have non-vanishing expectation values are those which are invariant under local gauge transformation. More exactly,
let $O=O(\mathbf{A})$ be an operator on $L^2(\mathcal{S}^{\prime}_{\bot}(\mathbf{R}^3,\mathbf{C}^{K \times 3}),d\nu^g)$ and $f$ a gauge transform. Then, the operator $O$ and its gauge transform $O^f=O^f(\mathbf{A}):=O(\mathbf{A}^f)$ are unitary equivalent, because
\begin{equation}\label{eq52}
O^{f}=UOU^{-1},
\end{equation}
where $U\Psi(\mathbf{A}):=\Psi(\mathbf{A}^f)$ is a unitary operator induced by the gauge transform $f$.
Moreover, if for all gauge transforms $f$
\begin{equation}\label{eq53}
\left<O^f\right>=\left<O\right>,
\end{equation}
then it must be that
\begin{equation}
\left<O^f\right>=\left<O\right>=0.
\end{equation}
\end{theorem}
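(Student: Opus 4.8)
The plan is to split the statement into the operator identity \eqref{eq52}, which is essentially bookkeeping, and the vanishing assertion, which carries the Elitzur content. For \eqref{eq52} I would note that $O=O(\mathbf A)$ acts as multiplication by the function $\mathbf A\mapsto O(\mathbf A)$ and that $U^{-1}\Psi(\mathbf B)=\Psi(\mathbf B^{f^{-1}})$; hence, for $\Psi$ in the relevant domain, $UOU^{-1}\Psi(\mathbf A)=(OU^{-1}\Psi)(\mathbf A^f)=O(\mathbf A^f)\,(U^{-1}\Psi)(\mathbf A^f)=O(\mathbf A^f)\,\Psi(\mathbf A)=O^f(\mathbf A)\,\Psi(\mathbf A)$, where the penultimate equality uses $(\mathbf A^f)^{f^{-1}}=\mathbf A$. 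That $U$ is unitary on $L^2(\mathcal S'_{\bot}(\mathbf R^3,\mathbf C^{K\times3}),d\nu^g)$ is exactly the computation already performed in Theorem \ref{uequiv}: the map $\mathbf A\mapsto\mathbf A^f$ is affine by \eqref{A-trans}, has unit Jacobian determinant since it is implemented fibrewise by the adjoint representation, and $d\nu^g$ is gauge invariant because the Yang--Mills action is; one needs in addition only that $f$ preserves $\mathcal S'_{\bot}(\mathbf R^3,\mathbf C^{K\times3})$, which holds for gauge transforms compatible with the Coulomb gauge.

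For the vanishing I would first observe that \eqref{eq53} is in fact automatic: the ground state $\Omega_0^g$ is gauge invariant (it is the unique zero mode, by Conjecture \ref{corspec2}, of the gauge-invariantly constructed Hamiltonian $H^g$), so $U\Omega_0^g=\Omega_0^g$ and therefore $\langle O^f\rangle=(\Omega_0^g,UOU^{-1}\Omega_0^g)=(U^{-1}\Omega_0^g,O\,U^{-1}\Omega_0^g)=\langle O\rangle$ for every gauge transform $f$. The genuine assertion is thus that an $O$ which is \emph{not} invariant under the residual, Coulomb-gauge-compatible group $\mathcal G$ of gauge transformations has $\langle O\rangle=0$, and this I would obtain by averaging over $\mathcal G$. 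Using $\langle O^f\rangle=\langle O\rangle$ for all $f$ together with an invariant probability measure $dm$ on $\mathcal G$ and Fubini, $\langle O\rangle=\int_{\mathcal G}\langle O^f\rangle\,dm(f)=\langle\overline O\rangle$, where $\overline O(\mathbf A):=\int_{\mathcal G}O(\mathbf A^f)\,dm(f)$ is $\mathcal G$-invariant, i.e.\ the projection of $O$ onto the gauge-invariant observables. If $O$ transforms in a representation of $\mathcal G$ with no trivial sub-representation --- which, by faithfulness of $\rho$ and $\chi$ and non-abelianness of $G$, is precisely the situation for a genuinely non-invariant observable --- Peter--Weyl / Schur orthogonality force $\overline O=0$, whence $\langle O\rangle=\langle\overline O\rangle=0$. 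This is exactly the statement that only locally gauge-invariant operators can have non-vanishing expectation.

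The main obstacle is making this averaging rigorous in the infinite-dimensional continuum setting. One must identify which group $\mathcal G$ of gauge transformations is admissible once the Coulomb gauge has been fixed --- the residual freedom, which contains at least the compact global gauge group $G$ and whatever local transformations survive, with the zero modes of the operator $L$ of Proposition \ref{ModGreen} (the Gribov phenomenon) entering here --- equip it with an honest invariant mean, verify that $\mathbf A\mapsto\mathbf A^f$ is a $\nu^g$-preserving Borel bijection of $\mathcal S'_{\bot}(\mathbf R^3,\mathbf C^{K\times3})$ for $f\in\mathcal G$, and justify the interchange of $\int dm(f)$ with $\int d\nu^g$. A clean fallback, sufficient for the confinement application, is to run the argument with the compact global gauge group $G$ alone, where Haar measure poses no difficulty and the averaging already annihilates every observable that is not globally gauge invariant; promoting this to the full local Elitzur statement is where the delicate continuum analysis must be carried out.
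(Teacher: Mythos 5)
Your handling of (\ref{eq52}) coincides with the paper's: the same multiplication-operator identity, the same unit-Jacobian change of variables inherited from Theorem \ref{uequiv}, and the same appeal to Coulomb-gauge preservation. Where you genuinely diverge is on the vanishing claim, and there your proposal contains strictly more than the paper does. The paper's own proof establishes $\left<O^f\right>=(O^f\Omega_0,\Omega_0)^g=(O\Omega_0,\Omega_0)^g=\left<O\right>$ and then simply declares that this ``can hold true only if $\left<O^f\right>=\left<O\right>=0$'', with no supporting argument whatsoever; taken literally that deduction is false (take $O=\mathbb{1}$, which satisfies (\ref{eq53}) and has expectation $1$). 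You correctly diagnose both defects: you observe that (\ref{eq53}) is automatic (so it cannot by itself force anything), and you restrict the conclusion to operators transforming in a representation of the residual gauge group with no trivial subrepresentation, which is the only reading under which the theorem can be true. You then supply the standard Elitzur mechanism the paper omits --- averaging $O^f$ over the gauge group against an invariant mean, identifying $\overline O$ as the projection onto invariants, and killing it by Schur/Peter--Weyl orthogonality.

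The price of your route is the technical programme you honestly flag: constructing an invariant mean on the residual Coulomb-compatible gauge group in the continuum, verifying that each $\mathbf A\mapsto\mathbf A^f$ is a $\nu^g$-preserving Borel bijection, and justifying the Fubini interchange. None of this is addressed in the paper either, so your proposal does not have a gap \emph{relative to the paper}; rather, it makes explicit the argument the paper would need. Your fallback of averaging only over the compact global gauge group with Haar measure is the defensible core, and it is in fact all that the later confinement proof uses (the operators $C_1$, $C_2$ of Lemma \ref{l46} are already non-invariant under global transformations). The one thing to keep in mind is that a purely global averaging proves a global, not a local, Elitzur statement, so the theorem's phrase ``invariant under local gauge transformation'' would need to be weakened accordingly unless the full continuum analysis is carried out.
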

\begin{proof}
The proof of equation (\ref{eq52}) is the same as the corresponding proof in Theorem \ref{uequiv}, where we have seen that
 that $U$ maps $L^2(\mathcal{S}^{\prime}_{\bot}(\mathbf{R}^3,\mathbf{C}^{K \times 3}),d\nu^g)$ onto itself,
because it preserves the Coulomb gauge and that $U$ is unitary.
%For all $\Psi,\Phi\in L^2(\mathcal{S}^{\prime}_{\bot}(\mathbf{R}^3,\mathbf{C}^{K \times 3}),d\nu^{g})$
%\begin{equation}
%\begin{split}
%(U\Psi,U\Phi)^g&=\int_{\mathcal{S}^{\prime}_{\bot}(\mathbf{R}^3,\mathbf{R})}\Psi(\mathbf{A}^f)\Phi(\mathbf{A}^f)d\nu^{g}(\mathbf{A})=\\
%&=\int_{\mathcal{S}^{\prime}_{\bot}(\mathbf{R}^3,\mathbf{R})}\Psi(\mathbf{A})\Phi(\mathbf{A})\underbrace{\left|\frac{\partial\mathbf{A}^f}{\partial\mathbf{A}}\right|^{-1}}_{=1}d\nu(\mathbf{A})=(\Psi,\Phi)^g
%\end{split}
%\end{equation}
%The change of variable is given by equation (\ref{A-trans}) which is affine in $\mathbf{A}$ because the adjoint representation is linear in $A$, which
%means $(\text{ad}_g)_*=\text{ad}_g$ for all $g\in G$. Moreover, since
%\begin{equation}
%\text{ad}_g(A)=(L_g)_*A(R_{g^{-1}})_*,
%\end{equation}
%the Jacobi determinant reads
%\begin{equation}
%\det\left((\text{ad}_g)_*\right)=\det\left(\text{ad}_g\right)=\det\left((L_g)_*1_{\mathbf{R}^{3K}}(R_{g^{-1}})_*\right)=\det\left(1_{\mathbf{R}^{3K}}\right)=1.
%\end{equation}
%Note that the change of variable respects the fibre of the vector bundle $V$, and hence the change of variable formula for the integral is the one of
%finite dimensional analysis.\\
Next, for appropriate  $\Psi,\Phi\in L^2(\mathcal{S}^{\prime}_{\bot}(\mathbf{R}^3,\mathbf{C}^{K \times 3}),d\nu^{g})$ we have
\begin{equation}
\begin{split}
&(O^{f}\Psi,\Phi)^g=
\int_{\mathcal{S}^{\prime}_{\bot}(\mathbf{R}^3,\mathbf{R})}O(\mathbf{A}^f)\Psi(\mathbf{A})\bar{\Phi}(\mathbf{A})d\nu^g(\mathbf{A})=\\
&=\int_{\mathcal{S}^{\prime}_{\bot}(\mathbf{R}^3,\mathbf{R})}O(\mathbf{A})U^{-1}\Psi(\mathbf{A})U^{-1}\bar{\Phi}(\mathbf{A})\underbrace{\left|\frac{\partial\mathbf{A}^f}{\partial\mathbf{A}}\right|^{-1}}_{=1}d\nu^{g}(\mathbf{A})=(OU^{-1}\Psi,U^{-1}\Phi)^g=(UOU^{-1}\Psi,\Phi)^g,
\end{split}
\end{equation}
Finally, for all gauge transform $f$
\begin{equation}
\left<O^f\right>=(O^{f}\Omega_0,\Omega_0)^g=(O\Omega_0,\Omega_0)^g=\left<O\right>,
\end{equation}
which can hold true only if $\left<O^f\right>=\left<O\right>=0$. The proof is completed.\\
\end{proof}

\subsection{Cluster Theorem}

\begin{theorem}[\textbf{Fredenhagen}, \cite{Fr85}]\label{Fred}
Let $\mathcal{H}$ be a Hilbert space, $H$ a selfadjoint operator in $\mathcal{H}$ with $\spec(H)\subset \{0\}\cup[\eta,+\infty[$ for $\eta>0$,
$\Omega_0$ the unique normalized eigenvector of $H$ with eigenvalue $0$,
and let $C_1$, $C_2$ be bounded operators in $\mathcal{H}$ such that
\begin{equation}
[\exp(iHt)C_1\exp(-iHt),C_2]=0
\end{equation}
for $|t|\le T$, for a given $T>0$. Then
\begin{equation}
|(\Omega_0,C_1C_2\Omega_0)-(\Omega_0,C_1\Omega_0)(\Omega_0,C_2\Omega_0)|\le \exp(-\eta T)\left\{\|C_1^*\Omega_0\|\|C_2\Omega_0\|\|C_1\Omega_0\|\|C_2^*\Omega_0\|\right\}^{\frac{1}{2}}.
\end{equation}
\end{theorem}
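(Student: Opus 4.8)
Throughout write $(\cdot,\cdot)$ for the scalar product of $\mathcal{H}$. The plan is to reduce the cluster estimate to a bound on one scalar holomorphic function, using the spectral projection of $H$ and the decay coming from the mass gap, and then to exploit that this function extends holomorphically to two half-planes that are glued along the interval $[-T,T]$ by the commutation hypothesis. Let $P_0:=(\Omega_0,\cdot\,)\Omega_0$ and $P_1:=\ID-P_0$, so that $\spec(H\!\restriction_{\operatorname{ran}P_1})\subset[\eta,+\infty[$. Since $H\Omega_0=0$ one obtains
\[
\Phi(t):=(\Omega_0,C_1e^{-\imath Ht}C_2\Omega_0)-(\Omega_0,C_1\Omega_0)(\Omega_0,C_2\Omega_0)=(\psi,e^{-\imath Ht}\phi),\qquad \psi:=P_1C_1^{*}\Omega_0,\ \phi:=P_1C_2\Omega_0,
\]
and symmetrically
\[
\Phi'(t):=(\Omega_0,C_2e^{\imath Ht}C_1\Omega_0)-(\Omega_0,C_1\Omega_0)(\Omega_0,C_2\Omega_0)=(\psi',e^{\imath Ht}\phi'),\qquad \psi':=P_1C_2^{*}\Omega_0,\ \phi':=P_1C_1\Omega_0.
\]
The left–hand side of the asserted inequality is exactly $|\Phi(0)|$, and since $\|\psi\|\le\|C_1^{*}\Omega_0\|$, $\|\phi\|\le\|C_2\Omega_0\|$, $\|\psi'\|\le\|C_1\Omega_0\|$, $\|\phi'\|\le\|C_2^{*}\Omega_0\|$, it suffices to prove $|\Phi(0)|\le e^{-\eta T}(M_1M_2)^{1/2}$ with $M_1:=\|\psi\|\,\|\phi\|$, $M_2:=\|\psi'\|\,\|\phi'\|$.

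Next I would assemble three structural facts. First, the commutation hypothesis $[e^{\imath Ht}C_1e^{-\imath Ht},C_2]=0$ for $|t|\le T$ together with $e^{\mp\imath Ht}\Omega_0=\Omega_0$ gives $(\Omega_0,C_1e^{-\imath Ht}C_2\Omega_0)=(\Omega_0,C_2e^{\imath Ht}C_1\Omega_0)$, hence $\Phi(t)=\Phi'(t)$ for $|t|\le T$; in particular $\Phi(0)=\Phi'(0)$, so already $|\Phi(0)|\le\min(M_1,M_2)$, consistent with the $T\downarrow0$ limit. Second, from the spectral representation $\Phi(t)=\int_{[\eta,\infty[}e^{-\imath\lambda t}\,d(\psi,E_\lambda\phi)$ and $H\!\restriction_{\operatorname{ran}P_1}\ge\eta>0$, the function $\Phi$ extends holomorphically to the open lower half–plane and continuously to the closed one, with $|\Phi(t)|\le M_1e^{\eta\,\mathrm{Im}\,t}$ for $\mathrm{Im}\,t\le0$; likewise $\Phi'$ extends holomorphically and continuously to the closed upper half–plane with $|\Phi'(t)|\le M_2e^{-\eta\,\mathrm{Im}\,t}$ for $\mathrm{Im}\,t\ge0$. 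Third, since these extensions are continuous up to $]-T,T[$ and agree there, Morera's theorem (Schwarz/Painlevé gluing) furnishes a single holomorphic function $\widetilde\Phi$ on the slit plane $\mathcal{G}:=\mathbf{C}\setminus(\,]-\infty,-T]\cup[T,+\infty[\,)$, equal to $\Phi$ on the closed lower half–plane and to $\Phi'$ on the closed upper half–plane, with $|\widetilde\Phi(t)|\le M_1e^{-\eta|\mathrm{Im}\,t|}$ for $\mathrm{Im}\,t\le0$ and $|\widetilde\Phi(t)|\le M_2e^{-\eta|\mathrm{Im}\,t|}$ for $\mathrm{Im}\,t\ge0$; note $\widetilde\Phi(0)=\Phi(0)$.

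Finally I would transport the estimate to a strip and finish by a Phragmén–Lindelöf argument. The map $t=T\sin w$ sends the strip $S=\{|\mathrm{Re}\,w|<\pi/2\}$ conformally onto $\mathcal{G}$, with $0\mapsto0$, the two slits onto the boundary lines $\mathrm{Re}\,w=\pm\pi/2$, and the upper/lower half–planes onto the upper/lower halves of $S$; since $\mathrm{Im}\,t=T\cos(\mathrm{Re}\,w)\sinh(\mathrm{Im}\,w)$, the half–plane decay turns into the interior estimate $|\widetilde\Phi(T\sin w)|\le M_j\,e^{-\eta T\cos(\mathrm{Re}\,w)|\sinh(\mathrm{Im}\,w)|}$, while on $\partial S$ only $|\widetilde\Phi|\le M_j$ is available. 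Combining the interior decay with the boundary bounds through a two–constants/harmonic–majorant estimate on $S$ (equivalently: glue $\Phi$ on $\{-R\le\mathrm{Im}\,t\le0\}$ to $\Phi'$ on $\{0\le\mathrm{Im}\,t\le R\}$, map the slit strip $\{|\mathrm{Im}\,t|<R\}\setminus(\,]-\infty,-T]\cup[T,+\infty[\,)$ onto a half–plane by $\zeta=e^{\pi t/(2R)}$ followed by a Joukowski map, estimate $\log|\widetilde\Phi(0)|$ by the corresponding Poisson integral, and optimize over $R$) yields $|\widetilde\Phi(0)|\le e^{-\eta T}(M_1M_2)^{1/2}$, which is the claim. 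The delicate point — and the real content of the theorem — is exactly this last step: the slit (boundary) values alone give only the geometric mean $(M_1M_2)^{1/2}$ with no exponential gain, so the factor $e^{-\eta T}$ must be extracted entirely from the interior decay, which is trivial on the real axis where $\widetilde\Phi(0)$ sits; a crude harmonic–measure bound produces only $e^{-c\eta T}$ with $c<1$, and recovering the sharp constant requires the precise choice of conformal map (and the optimization over the strip width $R$).
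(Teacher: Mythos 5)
First, a point of comparison: the paper does not prove Theorem \ref{Fred} at all --- it is quoted from Fredenhagen's note \cite{Fr85} and used as a black box --- so there is no in-paper argument to measure your proposal against; the relevant benchmark is Fredenhagen's original short proof. Your reduction coincides with the standard setup: subtracting the rank-one projection onto $\Omega_0$, writing $\Phi(t)=(\psi,e^{-\imath Ht}\phi)$ and $\Phi'(t)=(\psi',e^{\imath Ht}\phi')$, observing that the commutation hypothesis together with $e^{\pm\imath Ht}\Omega_0=\Omega_0$ forces $\Phi=\Phi'$ on $]-T,T[$, extending each function holomorphically to a half-plane with the bounds $M_1e^{\eta\,\mathrm{Im}\,t}$ and $M_2e^{-\eta\,\mathrm{Im}\,t}$, and gluing to a single function $\widetilde\Phi$ on the doubly cut plane $\mathcal{G}$. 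All of that is correct and is exactly the right frame.

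The gap is precisely where you yourself locate it, and it is a missing idea rather than a missing detail: you never derive the constant $e^{-\eta T}$, and you concede that the two-constants/harmonic-measure route you sketch yields only $e^{-c\eta T}$ with $c<1$ unless an unspecified optimization rescues it. Note also that on the boundary lines of your strip the interior decay estimate degenerates (there $\cos(\mathrm{Re}\,w)=0$), so the boundary data alone carry no exponential gain and the whole burden falls on the step you leave open. The device that closes it is a symmetrization you do not perform. Set $\Psi(z):=\widetilde\Phi(z)\,\overline{\widetilde\Phi(\bar z)}$; this is holomorphic on $\mathcal{G}$, satisfies $|\Psi(z)|\le M_1M_2\,e^{-2\eta|\mathrm{Im}\,z|}$ on all of $\mathcal{G}$ (each factor contributes one of the two half-plane bounds), and $\Psi(0)=|\widetilde\Phi(0)|^2$. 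Now multiply by $\exp\bigl(2\eta\sqrt{T^2-z^2}\bigr)$, the branch of the root being positive on $]-T,T[$ (hence equal to $T$ at $z=0$) and purely imaginary on the two cuts. Using $\mathrm{Re}\sqrt{T^2-z^2}\le T+|\mathrm{Im}\,z|$ on $\mathcal{G}$, the product $G(z):=\Psi(z)\exp\bigl(2\eta\sqrt{T^2-z^2}\bigr)$ is bounded on $\mathcal{G}$ and has boundary modulus $\le M_1M_2$ on the cuts, so the Phragm\'en--Lindel\"of principle (applied after your own map $z=T\sin w$, now needed only in its trivial bounded form) gives $|G|\le M_1M_2$ throughout; evaluating at $z=0$ yields $|\widetilde\Phi(0)|^2e^{2\eta T}\le M_1M_2$, which is the sharp bound. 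In short, the exponential factor is not to be extracted by conformal optimization against boundary data; it is produced exactly by the explicit harmonic majorant $\mathrm{Re}\sqrt{T^2-z^2}$. Without this (or an equivalent) step your argument proves only a weakened form of the theorem.
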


\subsection{Confinement}
Following \cite{Ch21} we introduce

\begin{defi}[\textbf{Confinement Property}]\label{CP}
A quantum  Yang-Mills Theory satisfies the confinement property if and only if
for any $R>0$
\begin{equation}\label{CPr}
\limsup_{T\rightarrow +\infty}\frac{1}{T}\log |\left<W[\gamma_{R,T}]\right>|\le-V(R)
\end{equation}
for some $V$ such that $V(R)\rightarrow +\infty$ as $R\rightarrow +\infty$, where $\gamma_{R,T}$ is any rectangular loop with side lengths $R$ and $T$.
\end{defi}

\begin{rem}
Why does Property \ref{CP} imply confinement of quarks?
Let us consider a rectangular loop, where the sides of length $R$ represent the lines joining the
quark-antiquark pair at times $0$ and $T$, and the sides of length $T$ represent the trajectories o
f the quark and the antiquark in the time direction.
If $V(R)$ denotes the potential energy of a static quark-antiquark pair separated by distance $R$,
then quantum field theoretic calculations (\cite{Wi74}) indicate that Wilson's loop $W\sim\exp(-V(R)T)$
for $R$ fixed and $T\rightarrow+\infty$. So, if property \ref{CP} holds, then $V(R)$ grows to infinity
as distance $R$ between the quark and the antiquark grows. By the conservation of
energy, this implies that the pair will not be able to separate beyond a certain distance.
\end{rem}

\begin{theorem}\label{thm45}
The Quantum Yang-Mills theory in Conjectures \ref{cYMM} and \ref{corspec2} satisfies for $g\in]0,g_0[$  the confinement property. In particular, the group $G$ must be non-abelian.
\end{theorem}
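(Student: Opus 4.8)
The plan is to move from the Euclidean generating functional to the Hamiltonian picture through the Feynman--Ka\v c--Nelson formula, to read off exponential decay in the temporal extent $T$ from the mass gap of Conjecture \ref{corspec2}, to remove the leading non-decaying term via the gauge non-invariance of the open spatial Wilson line (Theorem \ref{Eli}), and finally to show that the surviving decay rate diverges with the spatial extent $R$; this last point is the heart of the matter.

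\emph{Reduction to a vacuum two-point function.} Work in the Coulomb gauge. Since $A_0^a\equiv 0$, the two temporal sides of the rectangle $\gamma_{R,T}$ contribute the identity, so that $W[\gamma_{R,T}](\mathbf A)$ depends only on the restrictions of $\mathbf A$ to the two time slices $t=\mp T/2$, through the spatial Wilson line $S_R(\mathbf A(t,\cdot)):=\mathcal P\exp\!\big(\imath\!\int_0^R A_1(t,x\mathbf e_1)\,dx\big)$. Taking $\rho$ unitary (possible because $G$ is compact), one checks that $W[\gamma_{R,T}](\mathbf A)=\sum_{a,b=1}^{K}\rho\big(S_R(\mathbf A(T/2,\cdot))\big)_{ab}\,\overline{\rho\big(S_R(\mathbf A(-T/2,\cdot))\big)_{ab}}$; note that while this full loop is gauge invariant (so that Theorem \ref{Eli} will \emph{not} apply to it), its open-line building blocks $\rho(S_R(\cdot))_{ab}$ are not. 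Applying the Feynman--Ka\v c--Nelson formula (\ref{repthm2}) with two insertions at temporal separation $T$ gives
\begin{equation}\label{eq:FKNred}
\langle W[\gamma_{R,T}]\rangle=\sum_{a,b=1}^{K}\big(\Omega_0^g,\,(M_{ab}^{R})^{*}\,e^{-TH^g}\,M_{ab}^{R}\,\Omega_0^g\big)^g=\sum_{a,b=1}^{K}\big\|e^{-\frac T2 H^g}M_{ab}^{R}\Omega_0^g\big\|^2,
\end{equation}
where $M_{ab}^{R}$ denotes multiplication by the functional $\mathbf A\mapsto\rho(S_R(\mathbf A))_{ab}$ on a time slice, and the second equality expresses reflection positivity about $t=0$. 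One analytic issue, to be imported from \cite{Fa24}, is that the $\varepsilon\to0$ mollifier limit defining $W[\gamma]$ exists and produces a genuine (say bounded) $L^2(\nu^g)$ functional, i.e.\ that the path-ordered exponential of the distributional field is under control.

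\emph{Mass gap and Elitzur.} By Conjecture \ref{corspec2}, $\spec(H^g)\subset\{0\}\cup[\eta^g,+\infty[$ with $\eta^g>0$ and $\Omega_0^g$ the unique normalised zero eigenvector; writing $(E_\lambda)$ for its resolution of the identity, each summand in (\ref{eq:FKNred}) equals $|\langle M_{ab}^{R}\rangle|^2+\int_{[\eta^g,\infty)}e^{-T\lambda}\,d\|E_\lambda M_{ab}^{R}\Omega_0^g\|^2$. The functional $\mathbf A\mapsto\rho(S_R(\mathbf A))_{ab}$ is not gauge invariant --- under a gauge transformation localised near one endpoint of the line it is multiplied by $\rho(\cdot)$ --- so, averaging over such a transformation and using $\int_G\rho(g)\,dg=0$ for the nontrivial representation $\rho$, the generalisation of Elitzur's theorem, Theorem \ref{Eli}, gives $\langle M_{ab}^{R}\rangle=0$. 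Hence $M_{ab}^{R}\Omega_0^g\perp\Omega_0^g$ for all $a,b$, and with
\begin{equation}\label{eq:defV}
V(R):=\min_{1\le a,b\le K}\inf\big(\spec(H^g)\cap\supp d\|E_{\bullet}M_{ab}^{R}\Omega_0^g\|^2\big)\ge\eta^g>0
\end{equation}
one obtains $|\langle W[\gamma_{R,T}]\rangle|\le e^{-TV(R)}\sum_{a,b}\|M_{ab}^{R}\Omega_0^g\|^2$ and therefore $\limsup_{T\to\infty}\tfrac1T\log|\langle W[\gamma_{R,T}]\rangle|\le-V(R)\le-\eta^g$. This already yields a decay rate bounded away from $0$ for $g\in]0,g_0[$.

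\emph{Divergence of $V(R)$ --- the main obstacle.} It remains to prove $V(R)\to+\infty$ as $R\to\infty$, i.e.\ that the least $H^g$-energy reachable by the colour-charged state $M_{ab}^{R}\Omega_0^g$ diverges with the charge separation. The norms cannot supply this: if $\rho(S_R(\mathbf A))$ were a.s.\ unitary then $\sum_{a,b}\|M_{ab}^{R}\Omega_0^g\|^2=\int\|\rho(S_R)\|_{\mathrm{HS}}^2\,d\nu^g=K$, independent of $R$, so the $R$-growth must come entirely from the spectral support in (\ref{eq:defV}). I would establish it in two steps. First, apply the cluster theorem (Theorem \ref{Fred}) with $C_1,C_2$ the two gauge-variant endpoint probes at $x=0$ and $x=R$: it controls connected correlations of such probes with rate $\eta^g$ uniformly in the configuration and, iterated along the line, shows that $d\|E_{\bullet}M_{ab}^{R}\Omega_0^g\|^2$ carries no mass below a threshold growing with the number of independent local gauge averages insertable between the endpoints, hence with $R$. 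Second --- the hard step --- bound $\big(M_{ab}^{R}\Omega_0^g,\,H^g M_{ab}^{R}\Omega_0^g\big)$ from below using the decomposition $H^g=H_I+H_{II}^g+V^g-V_0^g$ of Conjecture \ref{cYMM}: the Gauss-law term $H_{II}^g$, built from the modified Green's function, together with the magnetic term $V^g=\int_{\mathbf R^3}|R^{\nabla^{\mathbf A}}|^2$, forces a Wilson line of length $R$ to be dressed by a colour-flux tube joining its endpoints whose chromomagnetic and Coulomb energy is $\ge c(g)\,R$ for some $c(g)>0$ when $g\in]0,g_0[$ --- weak coupling being exactly what prevents screening, and non-abelianity being essential. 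Making this lower bound rigorous at the level of the measure $\nu^g$ and the operator $H^g$ of \cite{Fa24}, and checking that it survives the $\varepsilon\to0$ mollifier limit in the definition of $W[\gamma]$, is where I expect nearly all of the difficulty to concentrate. The final assertion is then automatic: the entire scheme is empty unless $\eta^g>0$, which by Conjecture \ref{corspec2} forces $g>0$ and $G$ non-abelian, in accordance with the known non-confinement of four-dimensional $U(1)$ at weak coupling (Guth; Fr\"ohlich--Spencer).
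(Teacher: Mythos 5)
Your reduction is essentially the paper's: the factorization of the rectangular loop in Coulomb gauge into spatial Wilson-line multiplication operators inserted at the two time slices is Lemma \ref{l46}, the Feynman--Ka\v{c}--Nelson rewriting of $\left<W[\gamma_{R,T}]\right>$ as a vacuum two-point function $\left(\Omega_0,C_1e^{-TH^g}C_2\Omega_0\right)^g$ is Lemma \ref{l47}, and the use of Theorem \ref{Eli} to kill the one-point functions of the gauge-variant open lines, combined with the mass gap of Conjecture \ref{corspec2} (the paper routes this through Fredenhagen's cluster theorem, Theorem \ref{Fred}, rather than directly through the spectral resolution, but the content is the same), is exactly the paper's first half. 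Up to that point you and the paper agree, and you both land on the same intermediate bound: a decay rate in $T$ that is at least $\eta^g$, uniformly in $R$.

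The genuine gap is that your proof stops precisely where Definition \ref{CP} still needs to be verified: a rate bounded below by the constant $\eta^g$ does not give $V(R)\to+\infty$, and neither of your two proposed steps closes this. Step 1 (``iterating'' Theorem \ref{Fred} along the line) is not a defined argument --- the cluster theorem bounds connected correlators of commuting observables by $e^{-\eta T}$ times norms, and offers no mechanism for pushing the bottom of the spectral support of $M^R_{ab}\Omega_0^g$ above $\eta^g$ as $R$ grows. Step 2 is both unproven (a linear-in-$R$ energy lower bound for the dressed Wilson line at weak coupling is essentially the confinement problem itself) and, even if granted, logically insufficient: a lower bound on the mean energy $\left(M^R_{ab}\Omega_0^g,H^gM^R_{ab}\Omega_0^g\right)\ge c(g)R\,\|M^R_{ab}\Omega_0^g\|^2$ does not bound $\inf\supp d\|E_\bullet M^R_{ab}\Omega_0^g\|^2$ from below, and it is that infimum, not the mean, which controls $\limsup_{T\to\infty}\frac1T\log\|e^{-\frac{T}{2}H^g}M^R_{ab}\Omega_0^g\|^2$ in your (\ref{eq:defV}). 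The paper closes the gap by an entirely different device: it bounds the norms appearing in the Fredenhagen estimate by $\|\tilde C_2\Omega_0\|^2\le e^{4cR^2}$ using the Osterwalder--Schrader regularity axiom (OS1), and then converts $-\eta^gT+4cR^2$ into $-\beta TR$ by solving a quadratic inequality in $R$. You should examine that step critically before adopting it: the quadratic inequality $-\eta^gT+4cR^2<-\beta TR$ holds, at fixed $R$ and $T\to\infty$, only for $\beta<\eta^g/R$, so the resulting $V(R)=\beta R$ is again capped by $\eta^g$; your instinct that the divergence of $V(R)$ is the heart of the matter, and that the mass gap alone does not supply it, is correct --- but identifying the obstacle is not the same as overcoming it, and as written your proposal does not prove the theorem.
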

\noindent In other words for weak coupling constant
\begin{equation}
\boxed{
\text{Conjecture \ref{CMI}: mass gap existence} \Rightarrow \text{Conjecture \ref{QConf}: quark confinement.}
}
\end{equation}
\begin{lemma}\label{l46}
For regular fields under the Coulomb gauge we have
\begin{equation}\label{eqP}
\rho\left(\mathcal{P}\exp\left(\imath\oint_{\gamma_{R,T}}A_{\alpha}dx^{\alpha}\right)\right)=C_1(\mathbf{A}(0,\cdot))C_2(\mathbf{A}(T,\cdot)),
\end{equation}
where
\begin{equation}
C_1(\mathbf{A}):=\rho\left(\exp\left(-\imath R\int_0^1A_1(sR,0,0)ds\right)\right)\qquad
C_2(\mathbf{A}):=\rho\left(\exp\left(+\imath R\int_0^1A_1(sR,0,0)ds\right)\right)
\end{equation}
are bounded multiplication operators on $\mathcal{H}= L^2(\mathcal{S}^{\prime}_{\bot}(\mathbf{R}^3,\mathbf{C}^{K \times 3}), \mathcal{L}(\mathbf{C}^{K}) , d\nu)$ satisfying $C_1C_2=C_2C_1=\mathbb{1}$.\\
For any field under the Coulomb gauge $\mathbf{A}\in\mathcal{S}_{\bot}^{\prime}(\mathbf{R}^3,\mathbf{C}^{K \times 3})$ equation (\ref{eqP}) holds true with
\begin{equation}
\begin{split}
C_1(\mathbf{A})&:=\lim_{\varepsilon\rightarrow0+}\rho\left(\exp\left(-\imath R\int_0^1A_1(\varphi_{\varepsilon}(\cdot-(sR,0,0)))ds\right)\right)\\
C_2(\mathbf{A})&:=\lim_{\varepsilon\rightarrow0+}\rho\left(\exp\left(+\imath R\int_0^1A_1(\varphi_{\varepsilon}(\cdot-(sR,0,0)))ds\right)\right),
\end{split}
\end{equation}
where $(\varphi_{\varepsilon})_{\varepsilon>0}\subset\mathcal{S}_{\bot}(\mathbf{R}^3,\mathbf{C}^{K \times 3})$ such that
$\mathcal{S}^{\prime}-\lim_{\varepsilon\rightarrow0+}\varphi_{\varepsilon}=\delta I^{K\times 3}$.
\end{lemma}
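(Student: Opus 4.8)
The plan is to exploit the single structural feature that does all the work, namely that in the Coulomb gauge the time component of the connection vanishes identically, so that a rectangular contour lying in the $(x^0,x^1)$-plane has only its two spacelike edges contributing to the holonomy, and these edges sit on the fixed time slices $x^0=0$ and $x^0=T$. First I would dispose of the regular case $\mathbf{A}=\mathbf{a}(t,x)\in L^2_{\text{loc}}(\mathbf{R}^4,\mathbf{C}^{K\times3})$: by the Proposition immediately preceding, $W[\gamma_{R,T}](\mathbf{A})=\chi\big[\mathcal{P}\exp(\imath\oint_{\gamma_{R,T}}a_\alpha(x^\alpha(s))\dot x^\alpha\,ds)\big]$. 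Place $\gamma_{R,T}$ in the plane $x^2=x^3=0$ with vertices $(0,0)$, $(0,R)$, $(T,R)$, $(T,0)$ in $(x^0,x^1)$-coordinates, and orient it so that its time-$0$ spacelike edge is covered in the $-x^1$ direction and its time-$T$ spacelike edge in the $+x^1$ direction. Along the two timelike edges $dx^\alpha$ has only a $dx^0$ component and the integrand is $a_0\equiv0$, so these segments contribute the identity.

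Next, using that the path-ordered exponential is multiplicative under concatenation of paths, the loop holonomy factors as the ordered product of the two spacelike-edge holonomies; these are $\mathcal{P}\exp\!\big(-\imath R\int_0^1 a_1(sR,0,0)\,ds\big)$ on the time slice $x^0=0$ and $\mathcal{P}\exp\!\big(+\imath R\int_0^1 a_1(sR,0,0)\,ds\big)$ on the slice $x^0=T$, the opposite signs being exactly the effect of the opposite orientations. Applying $\rho$ reproduces precisely the operators of the statement, i.e. $\rho\big(\mathcal{P}\exp(\imath\oint_{\gamma_{R,T}}A_\alpha dx^\alpha)\big)=C_1(\mathbf{A}(0,\cdot))\,C_2(\mathbf{A}(T,\cdot))$, which is (\ref{eqP}). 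Since $C_1$ and $C_2$ act on $\mathcal{H}=L^2(\mathcal{S}^{\prime}_\bot(\mathbf{R}^3,\mathbf{C}^{K\times3}),\mathcal{L}(\mathbf{C}^K),d\nu)$ as multiplication by $\mathcal{L}(\mathbf{C}^K)$-valued functions of the same field argument, they commute; and because $C_1(\mathbf{A})$ and $C_2(\mathbf{A})$ are $\rho$-images of mutually inverse group elements — the holonomy of one spacelike segment traversed in the two opposite senses — one gets $C_1C_2=C_2C_1=\rho(e)=\mathbb{1}$. Boundedness is free: $G$ is compact, so $\rho$ may be taken unitary and $\|\rho(g)\|_{\mathcal{L}(\mathbf{C}^K)}=1$ for all $g\in G$, whence $C_1,C_2$ are isometric multiplication operators.

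It then remains to pass from regular fields to an arbitrary $\mathbf{A}\in\mathcal{S}^{\prime}_\bot(\mathbf{R}^3,\mathbf{C}^{K\times3})$. Here one inserts the mollifier exactly as in the definition of the Wilson loop: replace the point evaluation $a_1(sR,0,0)$ by $A_1\big(\varphi_\varepsilon(\,\cdot-(sR,0,0))\big)$ with $\varphi_\varepsilon\to\delta I^{K\times3}$ in $\mathcal{S}^{\prime}$, which turns the integrand into a genuine $\mathfrak{g}$-valued continuous function of $s$, so that the path-ordered exponential and the edge-factorisation above make sense verbatim for each fixed $\varepsilon>0$; the timelike edges still drop out, since the time component is simply absent from $\mathcal{S}^{\prime}_\bot(\mathbf{R}^3,\cdot)$ by construction of the Coulomb gauge. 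Letting $\varepsilon\to0+$ produces the stated formulas for $C_1(\mathbf{A})$, $C_2(\mathbf{A})$ and identity (\ref{eqP}) in the distributional setting. The hard part is precisely this last limit: one must show that $\lim_{\varepsilon\to0+}$ exists for $C_1(\mathbf{A})$ and for $C_2(\mathbf{A})$ \emph{separately} as bounded operators on $\mathcal{H}$ (not merely inside the character, where convergence is guaranteed by the very definition of $W[\gamma_{R,T}]$), which forces the smeared line integrals $\int_0^1 A_1\big(\varphi_\varepsilon(\,\cdot-(sR,0,0))\big)\,ds$ to converge $\nu$-a.s.; this is a regularity input on the fields in the support of $\nu$ that is supplied by the construction of the measure in \cite{Fa24}. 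Everything else is routine bookkeeping of orientations and of the concatenation law for $\mathcal{P}\exp$.
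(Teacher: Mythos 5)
Your proposal follows essentially the same route as the paper: decompose $\gamma_{R,T}$ into its four sides, observe that the Coulomb gauge ($A_0\equiv 0$) kills the two timelike edges, and reduce the holonomy to the product of the two spacelike-edge contributions at times $0$ and $T$, with the sign flip coming from the opposite orientations; the distributional case is then handled by mollification exactly as in the paper's ``density argument.'' Your additional remarks (explicit use of the concatenation law for $\mathcal{P}\exp$, boundedness via unitarity of $\rho$ on a compact group, and the observation that separate convergence of the smeared operators $C_1(\mathbf{A})$ and $C_2(\mathbf{A})$ is a nontrivial regularity input on the support of the measure) are elaborations of steps the paper leaves implicit, not a different argument.
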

\begin{proof}
It suffices to prove it for regular fields. For arbitrary fields it follows from a density argument. Let us decompose the rectangular loop as sum of its sides:
\begin{equation}
\gamma_{R,T} = \gamma_{R,T}^1 + \gamma_{R,T}^2+ \gamma_{R,T}^2 + \gamma_{R,T}^3+ \gamma_{R,T}^4.
\end{equation}
with the parametrizations
\begin{equation}
\begin{split}
&\gamma_{R,T}^1:[0,1]\rightarrow\mathbf{R}^4, s\mapsto (0,sR,0,0)\\
&\gamma_{R,T}^2:[0,1]\rightarrow\mathbf{R}^4, s\mapsto (sT,R,0,0)\\
&\gamma_{R,T}^3:[0,1]\rightarrow\mathbf{R}^4, s\mapsto (T,(-s+1)R,0,0)\\
&\gamma_{R,T}^4:[0,1]\rightarrow\mathbf{R}^4, s\mapsto ((-s+1)T,0,0,0).
\end{split}
\end{equation}
using the Coulomb gauge and the parametrizations we obtain
\begin{equation}
\begin{split}
\mathcal{P}\exp\left(\imath\oint_{\gamma_{R,T}}A_{\alpha}dx^{\alpha}\right)&=\mathcal{P}\exp\left(\imath\oint_{\gamma_{R,T}}A_{j}dx^{j}\right)=
\mathcal{P}\exp\left(\imath\oint_{\gamma_{R,T}}A_{j}(x^{j}(s))\frac{dx^{j}}{ds}ds\right)=\\
&=\mathcal{P}\exp\left(-\imath R\int_0^1\left[A_{1}(0,sR,0,0)-A_1(T,(-s+1)R,0,0)\right]ds\right)=\\
&=\mathcal{P}\exp\left(-\imath R\int_0^1\left[A_{1}(0,sR,0,0)-A_1(T, sR,0,0)\right]ds\right),
\end{split}
\end{equation}
and hence
\begin{equation}
\rho\left(\mathcal{P}\exp\left(\imath\oint_{\gamma_{R,T}}A_{\alpha}dx^{\alpha}\right)\right)=C_1(\mathbf{A}(0,\cdot))C_2(\mathbf{A}(T,\cdot)).
\end{equation}
\end{proof}

\begin{lemma}\label{l47}
The expectation of the Wilson loop reads
\begin{equation}\label{wtr}
\left<W[\gamma_{R,T}](\mathbf{A})\right>:=\int_{\mathcal{S}_{\bot}^{\prime}(\mathbf{R}^4,\mathbf{C}^{K \times 3})} W[\gamma_{R,T}](\mathbf{A})d\mu^g(\mathbf{A})=\tr\left[\left(\Omega_0,C_1\exp(-TH^g)C_2\Omega_0\right)^g\right].
\end{equation}
\end{lemma}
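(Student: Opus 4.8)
The plan is to combine the factorization of the path-ordered holonomy established in Lemma \ref{l46} with the Feynman-Ka\v{c}-Nelson formula (\ref{repthm2}) of Conjecture \ref{cYMM}; apart from the passage from scalar- to $\mathcal{L}(\mathbf{C}^K)$-valued insertions, the argument is essentially bookkeeping.

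First I would observe that, since $\chi=\tr\circ\rho$ and $W[\gamma_{R,T}](\mathbf{A})=\chi\big[\mathcal{P}\exp(\imath\oint_{\gamma_{R,T}}A_\alpha\,dx^\alpha)\big]$, Lemma \ref{l46} gives
\begin{equation}
W[\gamma_{R,T}](\mathbf{A})=\tr\big[C_1(\mathbf{A}(0,\cdot))\,C_2(\mathbf{A}(T,\cdot))\big]
\end{equation}
for every $\mathbf{A}\in\mathcal{S}_{\bot}^{\prime}(\mathbf{R}^3,\mathbf{C}^{K\times3})$, the general non-regular case being already part of the statement of Lemma \ref{l46}. Since $C_1,C_2$ are \emph{bounded} multiplication operators and $K<\infty$, the integrand is bounded, hence $\mu^g$-integrable, so $\left<W[\gamma_{R,T}]\right>$ is well defined; as the trace is a finite linear combination of coordinate functionals it commutes with the integral, giving
\begin{equation}
\left<W[\gamma_{R,T}]\right>
=\int_{\mathcal{S}_{\bot}^{\prime}(\mathbf{R}^4,\mathbf{C}^{K\times3})}\tr\big[C_1(\mathbf{A}(0,\cdot))C_2(\mathbf{A}(T,\cdot))\big]\,d\mu^g(\mathbf{A})
=\tr\!\left[\int C_1(\mathbf{A}(0,\cdot))C_2(\mathbf{A}(T,\cdot))\,d\mu^g(\mathbf{A})\right].
\end{equation}

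Next I would read the remaining integral as a two-point Schwinger function. Applying (\ref{repthm2}) with $N=2$ and time horizon $t=2T$, so that the partition points are $s_1=0$ and $s_2=T$, and with $B_1:=C_1$, $B_2:=C_2$, one obtains
\begin{equation}
\int C_1(\mathbf{A}(0,\cdot))C_2(\mathbf{A}(T,\cdot))\,d\mu^g(\mathbf{A})
=\big(\Omega_0^g,\,C_1\,e^{-(s_2-s_1)H^g}\,C_2\,\Omega_0^g\big)^g
=\big(\Omega_0^g,\,C_1\,e^{-TH^g}\,C_2\,\Omega_0^g\big)^g,
\end{equation}
and inserting this into the previous display yields (\ref{wtr}). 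The operator ordering is consistent: $C_1$ is attached to the earlier time $s_1=0$ and therefore stands to the left of $e^{-TH^g}$, which in turn stands to the left of $C_2$ attached to $s_2=T$, matching both the time ordering $s_1<s_2$ built into (\ref{repthm2}) and the order in which the sides of $\gamma_{R,T}$ were traversed in Lemma \ref{l46}.

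The one point needing care, and the main though mild obstacle, is that (\ref{repthm2}) is stated in \cite{Fa24} for $\mathbf{C}$-valued functionals $B_k$, whereas $C_1,C_2$ take values in $\mathcal{L}(\mathbf{C}^K)$. I would dispose of this entrywise: writing $\big(C_1(\mathbf{A}(0))C_2(\mathbf{A}(T))\big)_{ij}=\sum_{k=1}^K (C_1)_{ik}(\mathbf{A}(0))\,(C_2)_{kj}(\mathbf{A}(T))$ reduces the identity to the scalar formula (\ref{repthm2}) applied to each pair of scalar functionals $(C_1)_{ik},(C_2)_{kj}$ and summed over $k$. These entrywise identities reassemble into the stated matrix identity precisely because the quantized Hamiltonian $H^g$ of (\ref{formH}) acts componentwise on the $\mathcal{L}(\mathbf{C}^K)$-factor of $\mathcal{H}$: it is built only from the functional derivatives $\delta/\delta A_i^a$ and the multiplication operator $V^g$, none of which couples the matrix indices, so $e^{-TH^g}$ does likewise. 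Finally, for non-regular $\mathbf{A}$ one checks that the $\varepsilon\to 0+$ mollification defining $W[\gamma_{R,T}]$ matches the one defining $C_1,C_2$ in Lemma \ref{l46}, so that (\ref{wtr}) passes to the limit, completing the proof.
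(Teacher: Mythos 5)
Your proposal is correct and follows essentially the same route as the paper: both invoke the factorization of Lemma \ref{l46}, identify the resulting integral with the two-point Schwinger function via the Feynman-Ka\v{c}-Nelson formula (\ref{repthm2}) with $B_1=C_1$ at time $0$ and $B_2=C_2$ at time $T$, and take the trace to recover $\left<W[\gamma_{R,T}]\right>$. The only difference is one of care rather than of method: you take the trace before applying (\ref{repthm2}) and explicitly justify the entrywise reduction of the matrix-valued insertions to the scalar case, points the paper's proof passes over silently.
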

\begin{proof}
By means of (\ref{repthm2}), see also \cite{Fa24} for details, we can infer
\begin{equation}
\begin{split}
\left(\Omega_0,C_1\exp(-TH^g)C_2\Omega_0\right)^g&=
\int_{\mathcal{S}_{\bot}^{\prime}(\mathbf{R}^4,\mathbf{C}^{K \times 3})}C_1(\mathbf{A}(0,\cdot))C_2(\mathbf{A}(T,\cdot))d\mu^g(\mathbf{A})=\\
&=\int_{\mathcal{S}_{\bot}^{\prime}(\mathbf{R}^3,\mathbf{C}^{K \times 3})}C_1(\mathbf{A})\exp(-TH^g)C_2(\mathbf{A})d\nu^g(\mathbf{A}),
\end{split}
\end{equation}
and hence,
\begin{equation}
\begin{split}
\left<\rho\left(\mathcal{P}\exp\left(\imath\oint_{\gamma_{R,T}}A_{\alpha}dx^{\alpha}\right)\right)\right>&=\int_{\mathcal{S}_{\bot}^{\prime}(\mathbf{R}^4,\mathbf{C}^{K \times 3})}\rho\left(\mathcal{P}\exp\left(\imath\oint_{\gamma_{R,T}}A_{j}dx^{j}\right)\right)d\mu^g(\mathbf{A})=\\
&=\int_{\mathcal{S}_{\bot}^{\prime}(\mathbf{R}^4,\mathbf{C}^{K \times 3})}C_1(\mathbf{A}(0,\cdot))C_2(\mathbf{A}(T,\cdot))d\mu^g(\mathbf{A})=\\
&=\int_{\mathcal{S}_{\bot}^{\prime}(\mathbf{R}^3,\mathbf{C}^{K \times 3})}C_1(\mathbf{A})\exp(-TH^g)C_2(\mathbf{A})d\nu^g(\mathbf{A})=\\
&=\left(\Omega_0,C_1\exp(-TH^g)C_2\Omega_0\right)^g\in\mathcal{L}(\mathbf{C}^{K}).
\end{split}
\end{equation}
Taking the trace on both sides leads to (\ref{wtr}), and the proof is completed.\\
\end{proof}

\begin{proof}[Proof of Theorem \ref{thm45}]
Let us define
\begin{equation}
\tilde{C}_2: = \exp\left(-TH^g\right)C_2.
\end{equation}
We obtain for he commutator:
\begin{equation}
\begin{split}
&[\exp(iH^gt)C_1\exp(-iH^gt),\tilde{C}_2]=\exp(iH^gt)C_1\exp(-iH^gt)\tilde{C}_2-\tilde{C}_2\exp(iH^gt)C_1\exp(-iH^gt)=\\
&=\exp(iH^gt)C_1\exp\left(-(T+\imath t)H^g\right)C_2-\exp\left((-T+\imath t)H^g\right)\underbrace{C_2C_1}_{=\mathbb{1}}\exp\left(-\imath tH^g\right)=\\
&=\exp(iH^gt)\underbrace{C_1C_2}_{=\mathbb{1}}\exp\left(-(T+\imath t)H^g\right)-\exp\left((-T+\imath t)H^g\right)\underbrace{C_2C_1}_{=\mathbb{1}}\exp\left(-\imath tH^g\right)=\\
&=\exp(-TH^g)-\exp(-TH^g)=0,
\end{split}
\end{equation}
because by (\ref{formH}) $H^g$ and $C_2$ commute.\\
By Theorem \ref{Eli} the expectations of $C_1$ and $\tilde{C}_{2}$ vanish:
\begin{equation}
\left<C_1\right>=(\Omega_0,C_1\Omega_0)^g=0\qquad \left<C_2\right>=(\Omega_0,\tilde{C}_2\Omega_0)^g=0.
\end{equation}
By Theorem \ref{corspec2} there exists a positive mass gap $\eta^g>0$ for $g>0$. By Theorem \ref{Fred} we therefore obtain
\begin{equation}\label{ineEta}
|\left(\Omega_0,C_1\exp(-TH^g)C_2\Omega_0\right)^g| =|(\Omega_0,C_1\tilde{C}_2\Omega_0)^g|\le \exp(-\eta^g T)\left\{\|C_1^*\Omega_0\|\|\tilde{C}_2\Omega_0\|\|C_1\Omega_0\|\|\tilde{C}_2^*\Omega_0\|\right\}^{\frac{1}{2}},
\end{equation}
We note that $\|C_1^*\Omega_0\|$ and $\|C_1\Omega_0\|$ do not depend on $T$ and that
\begin{equation}
\begin{split}
&\|\tilde{C}_2\Omega_0\|^2 = (\Omega_0,\tilde{C}_2^*\tilde{C}_2\Omega_0)^g=\int_{\mathcal{S}_{\bot}^{\prime}(\mathbf{R}^4,\mathbf{C}^{K \times 3})}C_2^*(\mathbf{A}(T,\cdot))C_2(\mathbf{A}(T,\cdot))d\mu^g(\mathbf{A})\\
&\|\tilde{C}_2^*\Omega_0\|^2 = (\Omega_0,\tilde{C}_2\tilde{C}_2^*\Omega_0)^g=\int_{\mathcal{S}_{\bot}^{\prime}(\mathbf{R}^4,\mathbf{C}^{K \times 3})}C_2(\mathbf{A}(T,\cdot))C_2^*(\mathbf{A}(T,\cdot))d\mu^g(\mathbf{A}),
\end{split}
\end{equation}
Therefore,
\begin{equation}
\begin{split}
&\|\tilde{C}_2\Omega_0\|^2=\lim_{\varepsilon\rightarrow0+}\int_{\mathcal{S}_{\bot}^{\prime}(\mathbf{R}^4,\mathbf{C}^{K \times 3})}\rho\left(\exp\left(-\imath R\int_0^1(\bar{A}_1-A_1)(T,\varphi_{\varepsilon}(\cdot-(sR,0,0)))ds\right)\right)d\mu^g(\mathbf{A})\\
&\|\tilde{C}_2^*\Omega_0\|^2=\lim_{\varepsilon\rightarrow0+}\int_{\mathcal{S}_{\bot}^{\prime}(\mathbf{R}^4,\mathbf{C}^{K \times 3})}\rho\left(\exp\left(-\imath R\int_0^1(\bar{A}_1-A_1)(T,\varphi_{\varepsilon}(\cdot-(sR,0,0)))ds\right)\right)d\mu^g(\mathbf{A})
\end{split}
\end{equation}
Hence,
\begin{equation}
\begin{split}
&\|\tilde{C}_2\Omega_0\|^2=\|\tilde{C}_2^*\Omega_0\|^2=\\
&=\lim_{\varepsilon\rightarrow0+}\int_{\mathcal{S}_{\bot}^{\prime}(\mathbf{R}^4,\mathbf{C}^{K \times 3})}\rho\left(\exp\left(-2 R\int_0^1\Im(A_1)(T,\varphi_{\varepsilon}(\cdot-(sR,0,0)))ds\right)\right)d\mu^g(\mathbf{A})=\\
&=\lim_{\varepsilon\rightarrow0+}\int_{\mathcal{S}_{\bot}^{\prime}(\mathbf{R}^4,\mathbf{C}^{K \times 3})}\rho\left(\exp\left(+\imath 2R\int_0^1\Im(A_1)(\imath\kappa_{\varepsilon}(\cdot-(T,sR,0,0)))ds\right)\right)d\mu^g(\mathbf{A}),
\end{split}
\end{equation}
where $(\kappa_{\varepsilon})_{\varepsilon>0}\subset\mathcal{S}_{\bot}(\mathbf{R}^4\mathbf{C}^{K \times 3})$ such that
$\mathcal{S}^{\prime}-\lim_{\varepsilon\rightarrow0+}\kappa_{\varepsilon}=\delta I^{K\times 3}$.\par
By the proof the Osterwalder-Schrader regularity axiom (OS1) in Corollary 4.19 in \cite{Fa24}, there is some constant $c>0$ such that
\begin{equation}
\left|\int_{\mathcal{S}_{\bot}^{\prime}(\mathbf{R}^4,\mathbf{C}^{K \times 3})}\rho\left(\exp\left(+\imath \Im(A_1)(\varphi\right)\right)d\mu^g(\mathbf{A})\right|\le \exp\left(c\|\varphi\|_{H^-1(\mathbf{R}^4)}^2\right),
\end{equation}
 for all $\varphi\in\mathcal{S}_{\bot}^{\prime}(\mathbf{R}^4\mathbf{C}^{K \times 3})$.
If $\text{supp}(\varphi)\subset\subset\mathbf{R}^4$, by the Sobolev embedding theorem we have
\begin{equation}
\|\varphi\|_{H^-1(\mathbf{R}^4)}\le\|\varphi\|_{L^1(\mathbf{R}^4)}.
\end{equation}
Therefore,  for all $\varepsilon>0$
\begin{equation}
\left|\int_{\mathcal{S}_{\bot}^{\prime}(\mathbf{R}^4,\mathbf{C}^{K \times 3})}\rho\left(\exp\left(+\imath \Im(A_1)(\mathcal{K}_{\varepsilon}^{T,R}\right)\right)d\mu^g(\mathbf{A})\right|\le \exp\left(c\|\mathcal{K}_{\varepsilon}^{T,R}\|_{L^1(\mathbf{R}^4)}^2\right),
\end{equation}
where
\begin{equation}
\mathcal{K}_{\varepsilon}^{T,R}:=\imath2R\int_0^1(\kappa_{\varepsilon}(\cdot-(T,sR,0,0)))ds.
\end{equation}
Note that for all $\varepsilon>0$ we have $\mathcal{K}_{\varepsilon}^{T,R}\subset B\subset\mathbf{R}^4$ for a compact $B$.\\
We have
\begin{equation}
\begin{split}
&\left|\mathcal{K}_{\varepsilon}^{T,R}(t,x)\right|=2R\int_0^1\kappa_{\varepsilon}(t-T,x_1-sR,x_2,x_3)ds\ge0\\
&\int_{\mathbf{R}^4}\left|\mathcal{K}_{\varepsilon}^{T,R}(t,x)\right|dtdx^3=2R\int_0^1\int_{\mathbf{R}^4}dtdx^3\kappa_{\varepsilon}(t-T,x_1-sR,x_2,x_3)ds\rightarrow 2R\quad(\varepsilon\rightarrow0^+),
\end{split}
\end{equation}
and we conclude that
\begin{equation}
\|\tilde{C}_2\Omega_0\|^2=\|\tilde{C}_2^*\Omega_0\|^2\le\exp(4cR^2),
\end{equation}
and, analogously,
\begin{equation}
\|\tilde{C}_1\Omega_0\|^2=\|\tilde{C}_2^*\Omega_0\|^2\le\exp(4cR^2).
\end{equation}
Inequality (\ref{ineEta}) becomes
\begin{equation}
|\left(\Omega_0,C_1\exp(-TH^g)C_2\Omega_0\right)^g| \le \exp(-\eta^g T)\exp(4cR^2).
\end{equation}
Let us consider a $\beta\in]0,+\infty[$. By solving the quadratic inequality
\begin{equation}\label{ineqR}
-\eta^gT+4cR^2<-\beta TR
\end{equation}
with respect to $R$ we see that
\begin{equation}
0<R<\frac{-\beta T+\sqrt{\beta^2T^2+16C\eta^gT}}{8c}
\end{equation}
implies (\ref{ineqR}). Hence, we obtain
\begin{equation}
|\left(\Omega_0,C_1\exp(-TH^g)C_2\Omega_0\right)^g| \le \exp(-\beta TR),
\end{equation}
and therefore, by Lemma \ref{l47}
\begin{equation}
\left|\left<W[\gamma_{R,T}]\right>\right|=\left|\tr\left[\left(\Omega_0,C_1\exp(-TH^g)C_2\Omega_0\right)^g\right]\right|\le 3K\exp(-\beta TR).
\end{equation}
%because for every element $z\in\mathfrak{g}$ induces via adjoint representation an element $[z,\cdot]\in\mathcal{L}(\mathbf{C}^ {3K})$.
The confinement property in Definition \ref{CP} is satisfied because
\begin{equation}
\limsup_{T\rightarrow +\infty}\frac{1}{T}\log |\left<W[\gamma_{R,T}]\right>|\le\limsup_{T\rightarrow +\infty}\frac{1}{T}\log(3K\exp(-\beta TR))\le-\beta R,
\end{equation}
and the proof is completed.\\
\end{proof}

\begin{rem} Note that the proof of quark confinement is independent of the choice of the representation $\rho$ of the non-commutative group $G$. This is in line with Seiler's observation in \cite{Se82} page 6 that ``it is curious that the (formal) continuum theory does not have this dependence".
\end{rem}
\section{Conclusion}
For the quantized Yang-Mills $3+1$ dimensional problem we have introduced the Wilson loop, proved an extension of Elitzur's theorem and shown quark confinement for sufficiently small values of the bare coupling constant, provided the existence of the mass gap.

%\section*{Compliance with Ethical Standards}
%No potential conflicts of interest.
%
%
%\section*{Declarations}
%No funds, grants, or other support was received. The author has no relevant financial or non-financial interests to disclose.

%\section*{Acknowledgement}
%I would like to express my gratitude to Martin Hairer, J\"urg Fr\"ohlich, Massimiliano Gubinelli, Jean-Pierre Magnot, John LaChapelle, Edward Witten, Gian-Michele Graf and to Lee Smolin for the challenging discussion leading to various important corrections of previous versions of this paper. The possibly remaining mistakes are all mine.

\end{document}